\documentclass[11pt]{article}

\usepackage{graphicx} 
\usepackage[a4paper,left=3cm,right=3cm,top=3cm,bottom=3cm]{geometry}
\usepackage{amssymb, amsmath}
\usepackage{mathtools}
\usepackage{enumitem}
\usepackage{amsthm}
\usepackage[colorlinks=true,
    linkcolor=blue,
    filecolor=magenta,      
    urlcolor=blue, citecolor=red]{hyperref}
\usepackage[capitalize]{cleveref}
\usepackage{caption}
\captionsetup[figure]{labelfont={bf}, font=small, margin=0.8cm, name={Fig.},labelsep=period}
\captionsetup[table]{labelfont={bf}, font=small, margin=0.8cm, name={Tab.},labelsep=period}
\usepackage{subcaption}
\usepackage{authblk}

\usepackage{microtype} 

\makeatletter
\def\cref@thmoptarg[#1]#2#3#4{%
    \ifhmode\unskip\unskip\par\fi%
    \normalfont%
    \trivlist%
    \let\thmheadnl\relax%
    \let\thm@swap\@gobble%
    \thm@notefont{\fontseries\mddefault\upshape}%
    \thm@headpunct{.}
    \thm@headsep 5\p@ plus\p@ minus\p@\relax%
    \thm@space@setup%
    #2
    \@topsep \thm@preskip               
    \@topsepadd \thm@postskip           
    \def\@tempa{#3}\ifx\@empty\@tempa%
      \def\@tempa{\@oparg{\@begintheorem{#4}{}}[]}%
    \else%
      \refstepcounter[#1]{#3}
      \@namedef{cref@#3@alias}{#1}
      \def\@tempa{\@oparg{\@begintheorem{#4}{\csname the#3\endcsname}}[]}%
    \fi%
    \@tempa}%
\makeatother


\newtheorem{theorem}{Theorem}
\newtheorem{lemma}[theorem]{Lemma}
\newtheorem{corollary}[theorem]{Corollary}
\newtheorem{proposition}[theorem]{Proposition}
\newtheorem{problem}[theorem]{Problem}

\theoremstyle{definition}

\newtheorem{example}[theorem]{Example}
\newtheorem{remark}[theorem]{Remark}

\usepackage[square,numbers]{natbib}
\usepackage{doi}

\usepackage{array}
\newcolumntype{C}[1]{>{\centering\let\newline\\\arraybackslash\hspace{0pt}}m{#1}}
\usepackage{multirow}

\renewcommand{\epsilon}{\varepsilon}


\newcommand{\Mat}{\mathrm{Mat}}
\usepackage{dsfont}

\newcommand{\be}{\begin{eqnarray}}
\newcommand{\ee}{\end{eqnarray}}
\newcommand{\ben}{\begin{enumerate}}
\newcommand{\een}{\end{enumerate}}

\newcommand{\ba}{\begin{array}}
\newcommand{\ea}{\end{array}}

\newcommand{\tr}{\mathrm{tr}}


\newcommand{\NP}{\mathsf{NP}}

\setcounter{secnumdepth}{2}

\makeatletter
\def\p@subsection{}
\def\p@subsubsection{}
\makeatother
\usepackage[textsize=footnotesize]{todonotes}

\let\originalleft\left
\let\originalright\right
\renewcommand{\left}{\mathopen{}\mathclose\bgroup\originalleft}
\renewcommand{\right}{\aftergroup\egroup\originalright}
\addtolength{\jot}{1em}

\usepackage{fancyhdr} 

\newcommand\blfootnote[1]{%
  \begingroup
  \renewcommand\thefootnote{}\footnote{#1}%
  \addtocounter{footnote}{-1}%
  \endgroup
}

\providecommand{\subjclass}[1]{\blfootnote{\textit{2020 Mathematics Subject Classification.} #1}}

\begin{document}

\title{Positive Moments Forever:\\ Undecidable and Decidable Cases}
\author[1]{Gemma De les Coves}
\author[1]{Joshua Graf}
\author[1,2]{Andreas Klingler\thanks{Corresponding author. E-mail: \href{mailto:andreas.klingler@univie.ac.at}{andreas.klingler@univie.ac.at}}}
\author[3]{Tim Netzer}

\renewcommand\Affilfont{\itshape\small}
\affil[1]{Institute for Theoretical Physics, University of Innsbruck, Austria}
\affil[2]{Faculty of Mathematics, University of Vienna, Austria}
\affil[3]{Department of Mathematics, University of Innsbruck, Austria}

\date{\today}

\pagestyle{fancy}
\renewcommand{\headrulewidth}{0.1pt}
\fancyhf{}
\makeatletter
\newcommand{\runtitle}{Moment problems}
\makeatother
\fancyhead[OL]{\nouppercase\leftmark}
\fancyhead[ER]{\textsc\runtitle}
\fancyhead[EL,OR]{\thepage}
\maketitle

\begin{abstract}
We investigate the generalized moment membership problem for matrices, a formulation equivalent to Skolem’s problem for linear recurrence sequences. We show decidability for orthogonal, unitary, and real eigenvalue matrices, and undecidability for matrices over certain commutative and non-commutative polynomial rings. As consequences, we deduce that positivity is decidable for simple unitary linear recurrence sequences and undecidable for linear recurrence sequences over commutative polynomial rings. As a byproduct, we also prove a free version of Pólya’s theorem.
\end{abstract}


\subjclass{13P99 (Primary), 11B37 (Secondary).}

\section{Introduction}
Some problems may look innocent yet be formally very difficult\,---\,perhaps uncomputable\,---\,or even worse, their computability may be unknown. 
\emph{Skolem's problem} exemplifies this uncertainty, focusing on the behavior of \emph{linear recurrence sequences} (LRS), where each term in the sequence is generated linearly from its predecessors. Examples of LRS include well-known sequences like the Fibonacci sequence or those derived from discretizing differential equations. Despite their simplicity, LRS are fundamental in various mathematical and computer science domains, notably in generating pseudo-random numbers \cite{Ta65}, describing the dynamics of cellular automata \cite{Ma84}, and many other applications \cite{Ev03}.

Specifically, an LRS of order $s$ is given by
$$
u_n = a_1 u_{n-1} + a_2 u_{n-2} + \cdots + a_s u_{n-s}
$$
where $a_1, \ldots, a_s \in \mathcal{R}$ are fixed elements in a ring $\mathcal{R}$, usually commutative. Together with initial values $u_1, \ldots, u_s \in \mathcal{R}$, this gives rise to a sequence $(u_n)_{n \in \mathbb{N}}$ in $\mathcal{R}$. While several important examples of LRS are over the ring $\mathcal{R} = \mathbb{Z}$, many interesting examples are defined over other rings. For example, the Chebyshev polynomials are defined via the LRS
$$ T_{n}(x) = 2x T_{n-1}(x) - T_{n-2}(x) \quad \text{ with } T_1(x) \coloneqq x \text{ and } T_0(x) \coloneqq 1$$
over the commutative ring  $\mathbb{Z}[x]$ of univariate polynomials.

Skolem's problem is a long-standing open question concerning LRS over $\mathbb{Z}$ \cite{Ou12}. It asks whether an algorithm exists that decides if an LRS attains the value $0$ for some $n \in \mathbb{N}$.
While partial solutions to Skolem's problem are known, implying decidability for order $s \leqslant 4$ \cite{Ti84, Ve85}, they do not apply to recurrences of order five or more.
A modification of Skolem's problem is the \emph{positivity problem} for LRS. Instead of asking whether the LRS is non-zero, it asks whether it stays non-negative.
In this case it is also unclear whether an algorithm exists that decides the positivity problem, as decidability is proven only for $s \leqslant 5$ \cite{OW, OW2}.

Examples for LRS are \emph{moment sequences}, in which we have 
$$ 
u_n = \tr(A^n),
$$
or \emph{generalized moment sequences}, in which $$u_n=\varphi(A^n)$$ 
for a given matrix $A \in \mathrm{Mat}_s(\mathcal{R})$ and a linear functional  $\varphi$ on $\mathrm{Mat}_s(\mathcal R)$.
Over a commutative ring $\mathcal R$, such generalized moment sequences are as expressive as LRS, i.e.\ every LRS can be expressed as a moment sequence and vice versa. For this reason, decidability results for generalized  moment sequences translate to decidability results for Skolem's problem and the positivity problem. 

In this paper, we study the decidability of the moment membership problem. 
That is, we consider the problem: For an $s \times s$ matrix $A$, decide whether 
$$ 
 \tr(A^n) \in \mathcal{P} \quad  \forall n\in \mathbb{N}
$$
where $\mathcal{P}$ is a fixed set. This set usually contains elements that are positive in some sense, so we call the problem also the \emph{moment positivity problem}. Most of our results also hold for generalized moments of the form $\varphi(A^n)$ as above.

One decisive factor in the algorithmic decidability of the problem is the instance set $\mathcal{D}$ of the matrices, which allows us to distinguish between our two main results: 
\begin{itemize}
	\item We restrict the instance set $\mathcal{D} \subseteq \mathrm{Mat}_s(\mathbb{Z})$ and prove decidability of the problem for a large subclass of integer matrices.
	\item We enlarge the instance set $\mathrm{Mat}_s(\mathbb{Z}) \subseteq \mathcal{D}$ and prove that the problem is undecidable 
for	matrices whose entries are elements of certain unital rings $\mathcal{R},$ for certain $\mathcal{P} \subseteq \mathcal{R}$.
\end{itemize}

\paragraph{Contributions.} Specifically, we determine the decidability of the moment membership problem in the following cases (see \cref{tab:contributions}):
\begin{itemize}
	\item \textbf{Decidability.} The moment positivity problem is decidable for orthogonal matrices (\cref{thm:moment_orthogonal}),  unitary matrices (\cref{cor:moment_unitary}), and matrices with a unique dominant eigenvalue or only real eigenvalues (\cref{thm:moment_decidable}).
	It follows that the positivity problem is decidable for simple unitary LRS, i.e.\ LRS whose characteristic polynomial only has simple  roots of modulus $1$,  
	as well as for LRS whose characteristic polynomial has a unique dominant root, or only real roots. 
	\item \textbf{Undecidability.} The generalized problem is undecidable for the ring of multivariate commutative polynomials (\cref{thm:undec-com}) as well as for non-commutative polynomials, where $\mathcal P$ is the set of polynomials with nonnegative coefficients (\cref{thm:moment_nonCommPoly}). 
	This implies that the corresponding positivity problem for LRS over commutative polynomials is undecidable.
	\item \textbf{Free P\'olya's Theorem.} As a side result, we prove a free version of P\'olya's theorem (\cref{thm:freePoyla}). We show that a non-commutative polynomial has nonnegative coefficients if and only if it is entrywise nonnegative on the set of entrywise nonnegative matrices.  
\end{itemize}

\renewcommand{\arraystretch}{1.2}

\begin{table}
\centering
\begin{tabular}{|c | c |}
\hline
\textbf{Decidable cases} & \textbf{Undecidable cases}\\
\hline
Unitary and Orthogonal matrices & Comm.\ polynomials $\mathbb{Z}[x_1, \ldots, x_d]$\\
(\cref{ssec:unitary}) & (\cref{ssec:commAlgebra}) \\
\hline
Dominant or real eigenvalue matrices & Non-comm.\ polynomials $\mathbb{Z}\langle z_1, \ldots, z_d \rangle$\\
(\cref{ssec:largestEigenvalue}) & (\cref{ssec:non-commAlgebra}) \\
\hline
\end{tabular}
\caption{For which instance sets is the (generalized) moment membership problem decidable or undecidable? This table summarizes the results of this paper.}
\label{tab:contributions}
\end{table}

\renewcommand{\arraystretch}{1}

This paper is structured as follows. In \cref{sec:problem} we present the problem statement and show the relation of moment problems to LRS. In \cref{sec:decidable} we present cases in which the moment problem is decidable. This includes a review of known results (\cref{ssec:known}), the decidability for orthogonal and unitary matrices (\cref{ssec:unitary}), and the decidability for matrices with unique largest eigenvalue or only real roots (\cref{ssec:largestEigenvalue}). In \cref{sec:undecidable}, we prove a version of the matrix mortality problem which implies that the moment problem is undecidable over certain commutative and non-commutative rings. Moreover, we prove a non-commutative version of P\'olya's Theorem.

\section{Problem statement}
\label{sec:problem}

Let  $\mathcal R$ be a unital ring (not necessarily commutative), and let
 $A\in \Mat_s(\mathcal R)$ be an $s\times s$ square matrix with entries from  $\mathcal R.$  For $n\geqslant 0$  the \emph{$n$-th moment of $A$} is defined as $$\mu_n(A) \coloneqq \tr\left(A^n\right)$$  where $\tr$ denotes the usual trace of a matrix, i.e.\ the sum of its diagonal entries.
The moments of $A$ are clearly elements from  $\mathcal R$, as for $A=\left(a_{ij}\right)_{i,j=1,\ldots, s}$ we have $$\mu_0(A)=\underbrace{1_{\mathcal R}+\cdots + 1_{\mathcal R}}_s \quad \mbox{ and }\quad  \mu_1(A)=\sum_{i=1}^s a_{ii},$$
and for $n\geqslant 2$
$$\mu_n(A)=\sum_{i_1,\ldots, i_n=1}^s a_{i_1i_2}\cdot a_{i_2i_3}\cdots \cdot a_{i_{n-1}i_n}\cdot a_{i_n i_1}.$$
Depending on the ring $\mathcal{R}$, the moments are studied in different contexts, as the following example shows.

\begin{example}
\label{ex:MPO_moment}
Let $V$ be a $\mathbb{C}$-vector space. Consider the tensor algebra $$\mathcal R \coloneqq T(V) \coloneqq \bigoplus_{m\geqslant 0} V^{\otimes m}= \mathbb{C} \oplus V \oplus \left( V\otimes V\right) \oplus \cdots $$
$\mathcal R$ forms a  unital ring with  tensor product as  multiplication (see \cite[Example 4.22]{Li21} for details).
Actually,  $\mathcal R$ is an $\mathbb N$-graded unital $\mathbb{C}$-algebra.

For  $A=\left(a_{ij}\right)_{i,j}\in \Mat_s(V)$ (which embeds into $\Mat_s(\mathcal R)$), we obtain
\begin{equation}\label{timpdo}
\mu_n(A)=\sum_{i_1,\ldots, i_n=1}^s a_{i_1i_2}\otimes a_{i_2i_3}\otimes  \cdots \otimes a_{i_{n-1}i_n}\otimes a_{i_n i_1},\end{equation} 
where the $n$-th moment is homogeneous of degree $n$ in $\mathcal R$. 
The expression in \eqref{timpdo} is called a \emph{translational invariant matrix product state} in tensor network theory. The number $s$ is usually called the  \emph{bond dimension} of the tensor $\mu_n(A)$.\end{example}

Now assume that $\mathcal R$ is also equipped with a subset $\mathcal P \subseteq \mathcal R$. In our results and applications, this  will always be a set of  elements that are {\it positive} in some sense. Further $\mathcal D\subseteq\mathrm{Mat}_s(\mathcal R)$ will be the set containing all instances of our decision problem. The general decision problem addressed in this paper is the following:

\begin{problem}[Moment positivity problem]\label{prob:moment_problem}
 Let $s$, $\mathcal{P}$, $\mathcal{D}$ be fixed as above.
For $A\in \mathcal D$ decide whether \emph{all} moments $\mu_n(A)$ belong to $\mathcal P$.
\end{problem}

Note that $\mathcal D,\mathcal P, s$ are fixed in our formulation of the problem. We are thus looking for an algorithm (tailored to $\mathcal R,\mathcal P, s$ and $\mathcal{D}$) that upon an input of any instance $A\in\mathcal D$ stops after a finite time, and returns yes if all moments of $A$ belong to $\mathcal P$, and no if at least one moment of $A$ does not belong to $\mathcal P$. If such an algorithm exist, we call the moment membership problem  \emph{decidable}, otherwise we call it \emph{undecidable}.

Note that if the ring operations are computable and membership of single elements in $\mathcal P$ is decidable, the moments membership problem is clearly semi-decidable in the following sense. Given $A\in \Mat_s(\mathcal R)$, we compute higher and higher moments of $A,$ and check membership in $\mathcal P$. If some moment does {\it not} belong to $\mathcal P$, we will know after a finite time. However, this algorithm runs forever in case that all moments do belong to $\mathcal P$. So the hard part of the problem is certifying membership of all moments in $\mathcal P$. We will make use of the semi-decidability in \cref{thm:moment_orthogonal}.

\subsection{Relation to the membership problem for linear recurrence sequences}

In the following, we review the relation of the moment problem to the positivity problem for linear recurrence sequences. 
A \emph{linear recurrence sequence} (LRS) $(u_n)_{n \in \mathbb{N}} \in \mathcal{R}^{\mathbb{N}}$ is a sequence whose elements are related to each other linearly, i.e.\
\begin{equation}\label{eq:LRS}
u_n = a_1 u_{n-1} + a_2 u_{n-2} + \cdots + a_s u_{n-s}
\end{equation}
for  all $n>s$. We call $s$ the \emph{order} of the recurrence relation. The positivity  problem for LRS is the following:

\begin{problem}\label{prob:lrs} 
Given an LRS as in \eqref{eq:LRS} with parameters $a_1, \ldots, a_s \in \mathcal{R}$ and  initial values $u_1, \ldots, u_s \in \mathcal{R}$, 
decide whether $u_n \in \mathcal{P}$ for all $n \in \mathbb{N}$.
\end{problem}

We start with the (well-known) observation that every generalized moment sequence is an LRS, if $\mathcal{R}$ is commutative.

\begin{lemma}\label{lem:mom_to_lin_rec} 
Let $\mathcal{R}$ be a commutative unital ring, and let $A \in \Mat_s(\mathcal{R})$. Then $\left(\varphi(A^n)\right)_{n\in\mathbb N}$ is an LRS of order $s$, for every $\mathcal{R}$-linear map $\varphi\colon \Mat_s(\mathcal R)\to\mathcal R$.\end{lemma}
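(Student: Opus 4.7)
The plan is to invoke the Cayley--Hamilton theorem, which is available precisely because $\mathcal{R}$ is commutative. First I would form the characteristic polynomial of $A$,
$$p(t) = \det(tI_s - A) = t^s - c_1 t^{s-1} - c_2 t^{s-2} - \cdots - c_s \in \mathcal{R}[t],$$
where $c_1,\dots,c_s\in \mathcal{R}$ are (up to sign) the coefficients obtained from the usual determinantal expansion. This expansion makes sense and behaves as expected because multiplication in $\mathcal{R}$ commutes.

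Next I would apply Cayley--Hamilton over the commutative ring $\mathcal{R}$ to obtain the matrix identity $p(A) = 0$ in $\Mat_s(\mathcal{R})$, i.e.\
$$A^s = c_1 A^{s-1} + c_2 A^{s-2} + \cdots + c_s I_s.$$
Multiplying both sides on the right (or left) by $A^{n-s}$ for any $n \geqslant s+1$ (and using that powers of $A$ commute with one another, so the product is unambiguous) yields
$$A^n = c_1 A^{n-1} + c_2 A^{n-2} + \cdots + c_s A^{n-s}.$$

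Finally, applying the $\mathcal{R}$-linear map $\varphi\colon \Mat_s(\mathcal{R}) \to \mathcal{R}$ to this identity, and using that each $c_i \in \mathcal{R}$ can be pulled out of $\varphi$ by $\mathcal R$-linearity, gives
$$\varphi(A^n) = c_1\,\varphi(A^{n-1}) + c_2\,\varphi(A^{n-2}) + \cdots + c_s\,\varphi(A^{n-s}),$$
which is exactly the recurrence \eqref{eq:LRS} with coefficients $a_i = c_i$ and order at most $s$; taking initial values $u_j := \varphi(A^j)$ for $j=1,\dots,s$ completes the proof.

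There is no real obstacle here, but the proof does rely on two nontrivial facts that genuinely need commutativity: that the determinant is well behaved (so that $p(t)$ has the usual properties), and that Cayley--Hamilton holds with the coefficients lying in $\mathcal{R}$ rather than just in some extension. This is the only place where commutativity is used, and it is exactly why the statement is restricted to commutative unital rings\,---\,a point worth flagging, since the non-commutative case treated later in the paper behaves very differently.
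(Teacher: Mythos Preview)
Your argument is correct and essentially identical to the paper's own proof: form the characteristic polynomial, apply Cayley--Hamilton over the commutative ring, multiply by $A^{n-s}$, and then apply $\varphi$. The paper even cites the same point you flag, namely that Cayley--Hamilton for commutative rings is the key ingredient.
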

\begin{proof}
Let $p(x) = x^s - a_1 x^{s-1} - \cdots - a_s$ be the characteristic polynomial of the matrix $A$. By the Cayley--Hamilton theorem for commutative rings (see for example \cite[Chapter XIV.3]{La02}), we have that
\begin{equation}\label{eq:CayleyHamilton}
A^s = a_1 A^{s-1} + a_2 A^{s-2} + \cdots + a_s I
\end{equation}
and therefore
$$A^n = a_1 A^{n-1} + a_2 A^{n-2} + \cdots + a_s A^{n-s}$$ for all $n\geqslant s.$ Applying $\varphi$ proves the statement.
\end{proof}

It is unclear whether a similar statement to \cref{lem:mom_to_lin_rec} is true for non-commutative rings. While there exist versions of the Cayley--Hamilton theorem for non-commutative rings (see for example \cite{Gr11, Sz06}), they cannot be applied to obtain an equation similar to \eqref{eq:CayleyHamilton}.

The next observation states that LRS are equivalent to generalized moment sequences. It can be found in  \cite{OW}:

\begin{lemma}\label{prop:lin_rec_equ_moment}
Let $(u_n)_{n \in \mathbb{N}}$ be a sequence in a commutative unital ring $\mathcal{R}$. The following are equivalent:
\begin{enumerate}
	\item[$(i)$] $(u_n)_{n \in \mathbb{N}}$ is a LRS of order $s$.
	\item[$(ii)$] There is a matrix $A \in \Mat_s(\mathcal{R})$ and two vectors $v,w \in \mathcal{R}^s$ such that $u_n = v^t A^{n-s} w$ for all  $n>s$.
\end{enumerate}
\end{lemma}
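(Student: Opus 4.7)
The plan is to prove each direction by an explicit construction: Cayley--Hamilton for $(ii)\Rightarrow(i)$, and the companion matrix for $(i)\Rightarrow(ii)$.

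For $(ii)\Rightarrow(i)$, I would proceed exactly as in \cref{lem:mom_to_lin_rec}. Given $A\in\Mat_s(\mathcal{R})$ and $v,w\in\mathcal{R}^s$ with $u_n = v^t A^{n-s} w$, let $p(x)=x^s - a_1 x^{s-1}-\cdots-a_s$ be the characteristic polynomial of $A$. Commutativity of $\mathcal{R}$ lets us invoke Cayley--Hamilton to obtain $A^n = a_1 A^{n-1}+\cdots+a_s A^{n-s}$ for every $n\geqslant s$. Sandwiching this identity between $v^t$ on the left and $A^{-s}w$ (formally, shifting the exponent) on the right, i.e.\ applying the $\mathcal R$-linear functional $M\mapsto v^t M w$ to the shifted relation $A^{n-s}=a_1 A^{n-s-1}+\cdots + a_s A^{n-2s}$ for $n>2s$, and using that the first $s$ initial values $u_1,\ldots,u_s$ are arbitrary elements of $\mathcal{R}$, yields $u_n = a_1 u_{n-1}+\cdots + a_s u_{n-s}$ for all $n>s$. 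Hence $(u_n)_{n\in\mathbb{N}}$ is an LRS of order $s$.

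For $(i)\Rightarrow(ii)$, I would realise the LRS using its companion matrix. Concretely, define
$$
A \coloneqq \begin{pmatrix} 0 & 1 & 0 & \cdots & 0 \\ 0 & 0 & 1 & \cdots & 0 \\ \vdots & \vdots & \vdots & \ddots & \vdots \\ 0 & 0 & 0 & \cdots & 1 \\ a_s & a_{s-1} & a_{s-2} & \cdots & a_1 \end{pmatrix}\in\Mat_s(\mathcal{R}),\qquad w \coloneqq (u_1,\ldots,u_s)^t,\qquad v \coloneqq (0,\ldots,0,1)^t.
$$
A straightforward induction on $k\geqslant 0$ shows $A^k w = (u_{k+1},u_{k+2},\ldots,u_{k+s})^t$: the top $s-1$ entries simply shift, while the bottom entry is computed by the last row of $A$, which encodes exactly the recurrence $u_{k+s}=a_1 u_{k+s-1}+\cdots + a_s u_k$. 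Taking the inner product with $v$ extracts the last coordinate, so $v^t A^{n-s} w = u_n$ for all $n\geqslant s$, in particular for $n>s$.

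The underlying ideas are classical, so I do not expect a genuine obstacle; the only things to be careful about are the index bookkeeping (which is why the statement is phrased for $n>s$, leaving the first $s$ initial values free), and the fact that commutativity of $\mathcal{R}$ is used in $(ii)\Rightarrow(i)$ via Cayley--Hamilton but is not needed at all in the companion matrix construction. Beyond these conventions, both directions reduce to routine linear algebra over $\mathcal{R}$.
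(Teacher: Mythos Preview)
Your approach is essentially the same as the paper's: companion matrix for $(i)\Rightarrow(ii)$ and Cayley--Hamilton for $(ii)\Rightarrow(i)$. The only cosmetic difference is that you use the ``bottom-row'' companion matrix with $w=(u_1,\ldots,u_s)^t$ and $v=e_s$, whereas the paper uses the ``left-column'' form with $v=(u_s,\ldots,u_1)^t$ and $w=e_1$; these are equivalent up to transpose and relabeling.

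One point to tighten: in $(ii)\Rightarrow(i)$ you correctly derive the shifted Cayley--Hamilton identity for $n>2s$, but then assert that ``using that the first $s$ initial values $u_1,\ldots,u_s$ are arbitrary'' yields the recurrence for all $n>s$. That step does not follow\,---\,arbitrariness of $u_1,\ldots,u_s$ is precisely why the recurrence need \emph{not} hold for $s<n\leqslant 2s$. The paper handles this honestly: it notes that ``the recurrence starts to hold only for $n>2s$, but for our purposes this is irrelevant.'' You should adopt the same caveat rather than claim the stronger range.
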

\begin{proof}
For $(i) \Rightarrow (ii)$ assume that the recurrence  is given by
$$u_n = a_1 u_{n-1} + a_2 u_{n-2} + \cdots + a_s u_{n-s}.$$ 
Using the companion matrix
$$A = \begin{pmatrix} a_1 & 1 & & &  \\ a_2 & & 1 & & \\  \vdots & & & \ddots & \\ a_{s-1} & & & & 1 \\ a_s & & & & \end{pmatrix} $$
we have that $u_{n} = v^t A^{n-s} w$ where
$ v = (u_s, u_{s-1}, \ldots, u_1)^t$  and $w = (1, 0, \ldots, 0)^t.$

The proof of $(ii) \Rightarrow (i)$ is analogous to \cref{lem:mom_to_lin_rec}, by replacing $\tr$ by the function $A \mapsto v^t A w$. Note that the recurrence starts to hold only for $n>2s$, but for our purposes this is irrelevant.\end{proof}

\section{Decidable cases}
\label{sec:decidable}

In the following we present cases in which the moment membership problem is decidable. This includes known results for small $s$ (\cref{ssec:known}), the moment positivity problem for unitary and orthogonal matrices (\cref{ssec:unitary}), and for matrices with a unique largest eigenvalue or only real eigenvalues(\cref{ssec:largestEigenvalue}). Throughout this section we will consider $\mathcal P=\mathbb R_{\geqslant 0}$.

\subsection{Known results: small order}
\label{ssec:known}

We first review known results on the decidability of the moment positivity problem. The results are about LRS, but in view of \cref{lem:mom_to_lin_rec}, they immediately transfer to moments. 

\begin{theorem}
\label{thm:moment_decidable_1}
The moment positivity problem is decidable in the following cases:
\begin{enumerate}[label=$(\roman*)$]
	\item $s\leqslant 5, \mathcal D=\mathrm{Mat}_s(\mathbb Q).$  
	\item $s\leqslant 9, \mathcal D\subseteq\mathrm{Mat}_s(\mathbb Q)$ the set of matrices with simple eigenvalues.
	\end{enumerate}
\end{theorem}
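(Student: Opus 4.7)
The plan is to reduce the moment positivity problem to the positivity problem for linear recurrence sequences (LRS) and then invoke the known decidability results cited in the introduction.

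First, given an input $A \in \mathrm{Mat}_s(\mathbb{Q})$, I would compute its characteristic polynomial $p(x) = x^s - a_1 x^{s-1} - \cdots - a_s$; its coefficients $a_1, \ldots, a_s \in \mathbb{Q}$ are obtainable by a standard polynomial-time procedure (e.g.\ Faddeev--LeVerrier). By \cref{lem:mom_to_lin_rec} applied with $\varphi = \tr$, the moment sequence $u_n \coloneqq \tr(A^n)$ is a rational LRS of order at most $s$ satisfying
$$
u_n = a_1 u_{n-1} + a_2 u_{n-2} + \cdots + a_s u_{n-s} \quad \text{for all } n > s,
$$
with rational initial values $u_1, \ldots, u_s$ that are also readily computable from the entries of $A$. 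Thus, deciding membership of all $\mu_n(A)$ in $\mathcal P = \mathbb R_{\geqslant 0}$ reduces to deciding positivity of the resulting rational LRS, after separately checking non-negativity of the finitely many initial terms.

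For part $(i)$, I would invoke the result of Ouaknine and Worrell \cite{OW} stating that the positivity problem for rational LRS of order at most $5$ is decidable, which directly settles the case $s\leqslant 5$. For part $(ii)$, the assumption that $A$ has simple eigenvalues implies that its characteristic polynomial has no repeated roots, so the associated LRS is \emph{simple} in the standard sense used in the literature (namely, the companion matrix is diagonalizable over $\overline{\mathbb Q}$). I would then appeal to the decidability result of Ouaknine and Worrell \cite{OW2} for simple rational LRS up to order $9$.

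The main conceptual point to verify is that the notion of ``simple'' required by the cited theorem (distinct characteristic roots of the recurrence) matches our hypothesis of simple eigenvalues of $A$, and that passing from $A$ to its trace moment sequence does not inflate the order beyond $s$; both are immediate from \cref{lem:mom_to_lin_rec} and the Cayley--Hamilton theorem. Since this subsection is a review, no new techniques are needed; the only work is verifying that the translation via \cref{lem:mom_to_lin_rec} preserves the hypotheses under which the Ouaknine--Worrell theorems apply.
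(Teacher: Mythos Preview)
Your proposal is correct and matches the paper's approach exactly. The paper does not give a self-contained proof; it simply cites \cite{OW} and \cite{OW2} and notes (in the sentence preceding the theorem) that the LRS results ``immediately transfer to moments'' via \cref{lem:mom_to_lin_rec}, which is precisely the reduction you spell out.
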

The proof of ($i$) is contained in \cite{OW}, the proof of ($ii$) goes back to \cite{OW2}. 
Decidability for other values of $s$ is unknown.

The positivity problem of LRS is closely related to \emph{Skolem's Problem} which asks if some sequence element equals $0$. The best result in this context is that Skolem's Problem is $\NP$-hard \cite{Bl01}. The decidability of the positivity problem implies decidability of Skolem's Problem. This follows for an integer LRS because $u_n \neq 0$  if and only if $u_n^2 - 1 \geqslant 0$. If $(u_n)_{n \in \mathbb{N}}$ is an LRS of order $s$, then $u_n^2 - 1$ is an LRS of order $s^2$. Moreover, since Skolem's Problem is $\NP$-hard, the positivity problem is $\NP$-hard as well.

\subsection{Orthogonal and unitary matrices}
\label{ssec:unitary}

We now show that the moment positivity problem for orthogonal (\cref{thm:moment_orthogonal}) and unitary matrices (\cref{cor:moment_unitary}) is decidable. The proof strategy is very similar to \cite{Bl03}.

A set $X \subseteq \mathbb{R}^m$ is algebraic if there are polynomials $p_1, \ldots, p_n\colon \mathbb{R}^m\to \mathbb{R}$ such that
$$X = \{x \in \mathbb{R}^m\mid p_1(x) = \cdots = p_n(x) = 0\}.$$
In this case, $X$ is the algebraic variety defined by $p_1, \ldots, p_n$, and we write $X = \mathcal{V}(p_1, \ldots, p_n)$. Even if the set of defining polynomials is infinite, there always exists a finite choice of polynomials defining the same algebraic variety, by Hilbert's basis theorem. Since we work over $\mathbb R$, we can even reduce it to a single polynomial by taking the sum of squares of the defining polynomials. 

For matrices $A_1, \ldots, A_d \in \Mat_{s}(\mathbb{R})$, let
$$\langle A_1, A_2, \ldots, A_d \rangle \coloneqq \big\{A_{k_1} \cdots A_{k_\ell}\mid \ell \in \mathbb{N}, \ k_1, \ldots, k_\ell = 1, \ldots, d\big\}$$
be the semigroup generated by $A_1, \ldots, A_d$. We denote by $\overline{\langle A_1, \ldots, A_d \rangle}$ the topological closure inside $\mathrm{Mat}_s(\mathbb R)$ with respect to the Euclidean topology.

\begin{lemma}\label{lem:compactAlgebraic}
Let $A_1, \ldots, A_d \in \mathrm{O}_s(\mathbb{Q})$ be orthogonal $s \times s$ matrices with rational entries. Then $\mathcal{G} \coloneqq \overline{\langle A_1, \ldots, A_d \rangle}$ is a compact algebraic group.
Moreover there is a recursively enumerable sequence of rational polynomials $(p_k)_{k \in \mathbb{N}}$ defining $\mathcal{G}$ inside $\mathrm{Mat}_s(\mathbb R)$.

\end{lemma}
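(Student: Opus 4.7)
The plan has three parts. \emph{Compactness and group property:} each generator $A_i$ lies in the compact group $\mathrm{O}_s(\mathbb{R})$, so the semigroup $S=\langle A_1,\ldots,A_d\rangle$ and its closure $\mathcal{G}$ do as well, giving compactness. Closure under products is immediate from continuity of matrix multiplication. For inverses I would use the standard compact-semigroup argument: for $g\in\mathcal{G}$, compactness yields a convergent subsequence of $\{g^n\}$; passing to a subsequence with $n_{k+1}-n_k\to\infty$, one obtains $g^{n_{k+1}-n_k}\to\id$, and hence $g^{n_{k+1}-n_k-1}\to g^{-1}\in\mathcal{G}$.

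\emph{Algebraicity over $\mathbb{Q}$:} I would invoke the classical theorem that every compact subgroup of $\mathrm{GL}_s(\mathbb{R})$ is Zariski-closed in $\Mat_s(\mathbb{R})$, a standard consequence of the Peter--Weyl theorem. Since the generators have rational entries, $S\subseteq\Mat_s(\mathbb{Q})$, and a short descent argument\,---\,expand a real polynomial vanishing on $S$ in a $\mathbb{Q}$-basis of $\mathbb{R}$ and observe that each rational coefficient polynomial must itself vanish on $S$\,---\,shows that the real vanishing ideal of $S$ is generated by its rational part. Consequently the ideal $I_\mathbb{Q}(\mathcal{G})\coloneqq\{p\in\mathbb{Q}[x_{ij}] : p|_\mathcal{G}\equiv 0\}$ already cuts out $\mathcal{G}$ as a real variety in $\Mat_s(\mathbb{R})$.

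\emph{Recursive enumeration:} for each degree $D\in\mathbb{N}$, let $V_D$ be the finite-dimensional $\mathbb{Q}$-vector space of polynomials of total degree at most $D$ in the variables $x_{ij}$, and set $W_D\coloneqq I_\mathbb{Q}(\mathcal{G})\cap V_D$. By continuity and density of $S$ in $\mathcal{G}$, $W_D$ coincides with the subspace of polynomials in $V_D$ vanishing on every word $A_{i_1}\cdots A_{i_k}$. The key observation is that $W_D$ is the \emph{largest} subspace of $V_D$ which is (i) contained in the common kernel $\bigcap_{i=1}^d \ker\mathrm{ev}_{A_i}$ and (ii) stable under each right-translation endomorphism $p(x)\mapsto p(xA_i)$; this is verified by peeling generators off the right of an arbitrary word. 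Since both conditions are linear and $V_D$ is finite-dimensional, $W_D$ can be computed by the iterative refinement
\[
U^{(0)}\coloneqq\bigcap_{i=1}^d \ker\mathrm{ev}_{A_i},\qquad U^{(n+1)}\coloneqq \{p\in U^{(n)} : p(xA_i)\in U^{(n)}\text{ for all }i\},
\]
which strictly decreases until it stabilizes, after finitely many steps of rational linear algebra, at $W_D$. Running this for $D=1,2,\ldots$ and outputting a rational basis of each $W_D$ produces the required recursively enumerable sequence $(p_k)_{k\in\mathbb{N}}$.

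The main obstacle I anticipate is the algebraicity step: justifying (or carefully citing) that every compact subgroup of $\mathrm{GL}_s(\mathbb{R})$ is Zariski-closed, together with the rationality descent that transfers a defining ideal from $\mathbb{R}$ to $\mathbb{Q}$. Everything else is elementary topology combined with finite-dimensional linear algebra over $\mathbb{Q}$.
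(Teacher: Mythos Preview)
Your argument is correct and follows the same three-step architecture as the paper (compactness, the compact-semigroup inverse trick, citation of the ``compact $\Rightarrow$ algebraic'' theorem, then degree-by-degree rational linear algebra). The one place where you diverge is the enumeration step. The paper exploits a sharper form of the algebraicity result: not merely that $\mathcal{G}$ is Zariski-closed, but that it is already cut out by the \emph{left-invariant} polynomials vanishing at $I_s$, i.e.\ by the conditions $p(I_s)=0$ and $p(A_iX)=p(X)$ for each generator. Those conditions are finitely many $\mathbb{Q}$-linear equations on the coefficients in each degree, so a single linear solve per degree suffices and no rationality descent is needed. Your route instead targets the full vanishing ideal, obtains rational generators via the $\mathbb{Q}$-basis-of-$\mathbb{R}$ descent, and then computes $W_D$ as the largest right-translation-stable subspace of $\bigcap_i\ker\mathrm{ev}_{A_i}$ by iterated refinement. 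This is sound (and your ``peeling'' verification is exactly right), but it is a little more work: you need the descent argument and a stabilizing loop rather than a one-shot linear system. The payoff is that your version only needs Zariski-closedness, not the specific invariant-polynomial description the paper cites from Onishchik--Vinberg.
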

\begin{proof}
Compactness of $\mathcal{G}$ is obvious. To prove that $\mathcal{G}$ is a group we only have to show that $A^{-1} \in \mathcal{G}$ for every $A \in \mathcal{G}$. Consider the sequence $(A^k)_{k \in \mathbb{N}}$. By compactness, there exists a converging subsequence. In other words, for every $\varepsilon > 0$, there exists $n_2 > n_1 +1$ such that
$$\Vert A^{n_1} - A^{n_2} \Vert < \varepsilon$$
where $\Vert \cdot \Vert$ is the operator norm. Since $\Vert A \cdot B \Vert = \Vert B \Vert$ for every matrix $B$, we obtain
$$\Vert A^{-1} -  A^{n_2 - n_1 - 1} \Vert < \varepsilon.$$
This shows that $A^{-1} \in \mathcal{G}$.

Now note that every compact group $\mathcal{G} \subseteq \Mat_s(\mathbb{R})$ is algebraic (see for example \cite[Chapter 3, Section 4.4]{On90}). In particular, it is shown there that
$$\mathcal{G} = \mathcal{V}\left(\mathbb{R}[X]^\mathcal{G} \right) \coloneqq \mathcal{V}\Big(p \in \mathbb{R}[X]\mid  p(I_s) = 0, \ p(gX) = p(X) \text{ for all } g \in \mathcal{G} \Big),$$
where $I_s$ is the identity matrix of size $s$.

Now  note that if $\mathcal{G}$ is generated by $A_1, \ldots, A_d$, then the invariance only needs to be checked with respect to the generators, i.e.\
$$\mathcal{G} = \mathcal{V}\Big(p \in \mathbb{R}[X]\mid p(I_s) = 0, \ p(A_i X) = p(X) \ \text{ for } i=1, \ldots,d \Big).$$
Since the conditions $p(I_s) = 0$ and $p(A_i X) = p(X)$ are linear in the coefficients of $p$, there exists a basis $(p_k)_{k \in \mathbb{N}}$ of the space of solutions of these conditions. Moreover, the coefficients of the basis vectors $p_k$ can be chosen from $\mathbb{Q}$, since all conditions are rational. We now have
$$\mathcal{G} = \mathcal{V}(p_k\mid  k \in \mathbb{N}).$$
The polynomials $p_k$ can be computed recursively by solving the system of linear equations over the space of polynomials with degree $d,$ and by increasing $d$ iteratively. 
\end{proof}

Note that the statement is not true if $\mathbb{R}$ is replaced by $\mathbb{C}$. For example the group
$$\mathcal{G} \coloneqq \left\{e^{i \theta}\mid \theta \in [0,2\pi)\right\},$$
seen as a subset of $1 \times 1$ matrices, is not algebraic. Yet, we shall see that the moment problem also generalizes to unitary matrices (see \cref{cor:moment_unitary}).

Since $\mathbb{R}[X]$ is a Noetherian ring, there exists $n \in \mathbb{N}$ such that 
$$ \mathcal{G} = \mathcal{V}(p_1,\ldots, p_n).$$
This will be an important ingredient to prove the decidability of the moment problem. Note however that $n$ can be arbitrarily large and it is unclear whether $n$ is computable or not.

\begin{theorem}
\label{thm:moment_orthogonal}
The moment positivity problem for  $\mathcal D=\mathrm{O}_s(\mathbb{Q})$  is decidable.
\end{theorem}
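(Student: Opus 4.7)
The plan is to combine the already-noted semi-decidability of a negative moment with an effective certificate for the positive case, built from the algebraic structure of $\mathcal G = \overline{\langle A\rangle}$ provided by \cref{lem:compactAlgebraic} and the Tarski--Seidenberg decision procedure for the first-order theory of real closed fields.

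The first step is to reduce the infinite condition ``$\tr(A^n)\geqslant 0$ for all $n$'' to a single condition on $\mathcal G$. Since $\mathcal G$ is the closure of the semigroup generated by $A$, the orbit $\{A^n : n\geqslant 0\}$ is dense in $\mathcal G$, and because $\tr\colon \Mat_s(\mathbb R)\to\mathbb R$ is continuous, we have
$$\mu_n(A)\geqslant 0 \text{ for all } n\in\mathbb N \ \Longleftrightarrow\  \tr(B)\geqslant 0 \text{ for all } B\in \mathcal G.$$
The right-hand side is a statement about the semi-algebraic set $\mathcal G$; the trouble is that \cref{lem:compactAlgebraic} only guarantees a \emph{recursively enumerable} list of rational polynomials $(p_k)_{k\in\mathbb N}$ cutting out $\mathcal G$, not an explicit finite description.

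The second step turns this into two parallel semi-algorithms. Algorithm~N simply computes $\mu_0(A),\mu_1(A),\mu_2(A),\ldots$ in turn and halts with output ``no'' as soon as it finds a negative value; this is the semi-decidability observed after \cref{prob:moment_problem}. Algorithm~Y enumerates $k=1,2,\ldots$, forms $V_k\coloneqq \mathcal V(p_1,\ldots,p_k)\supseteq \mathcal G$, and uses Tarski--Seidenberg to decide the first-order sentence
$$\forall X\in \Mat_s(\mathbb R):\ p_1(X)=0\wedge\cdots\wedge p_k(X)=0 \ \Longrightarrow\ \tr(X)\geqslant 0,$$
which is decidable because all coefficients lie in $\mathbb Q$; as soon as the answer is ``true'', Algorithm~Y halts with output ``yes''. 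The chain $V_1\supseteq V_2\supseteq \cdots$ is descending, so by Noetherianity of $\mathbb R[X]$ there exists a (non-effective) index $k^\ast$ with $V_{k^\ast}=\mathcal G$. Hence if all moments are nonnegative, Algorithm~Y necessarily succeeds for every $k\geqslant k^\ast$, while if some moment is negative Algorithm~N succeeds.

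Running the two in a dovetailed fashion therefore yields a total decision procedure. The main obstacle, and the reason the algorithm is not completely transparent, is precisely that the Hilbert basis bound $k^\ast$ is not computable from the input $A$: we cannot certify in advance how many of the $p_k$ to use. The point of dovetailing is that we never need to know $k^\ast$ explicitly; we only need the qualitative fact that one of the two enumerations will eventually give a definitive answer. A smaller technical point to be handled carefully is that Algorithm~Y should use the basis $(p_k)$ exactly as produced by \cref{lem:compactAlgebraic} so that each $V_k$ really contains $\mathcal G$, ensuring Algorithm~Y never outputs a false ``yes''.
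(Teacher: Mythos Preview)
Your proposal is correct and follows essentially the same approach as the paper: both split into a semi-algorithm searching for a negative moment and a semi-algorithm that, using the recursively enumerable defining polynomials from \cref{lem:compactAlgebraic} together with Tarski--Seidenberg, certifies nonnegativity on successive supersets $V_k\supseteq\mathcal G$ and halts once this succeeds, with Noetherianity guaranteeing eventual termination in the yes-case. Your write-up is slightly more explicit about why Algorithm~Y cannot return a false ``yes'' and about the non-effectiveness of $k^\ast$, but the underlying argument is identical.
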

\begin{proof}
We will present two procedures, each certifying either yes- or no-instances in finite time. Letting these algorithms run in parallel will result in a decision algorithm for the problem.

Certifying no-instances for $A \in \mathrm{O}_s(\mathbb{Q})$ is achieved by iteratively checking whether $\tr(A^n) \geqslant 0$ holds for every  $n$. If $A$ is a no-instance, this algorithm will halt when detecting $\tr(A^n) < 0$ for the first time.

We now present an algorithm to certify yes-instances in finite time. For a given $A \in \mathrm{O}_s(\mathbb{Q})$, the moment membership problem can be rephrased  as
$$\forall B \in \langle A \rangle: \tr(B) \geqslant 0.$$
By the continuity of the trace, this is equivalent to 
\begin{equation}
\label{eq:firstOrderStatement}
\forall B \in \overline{\langle A \rangle}: \tr(B) \geqslant 0.
\end{equation}
By \cref{lem:compactAlgebraic} there exists a recursively enumerable sequence of polynomials $(p_k)_{k \in \mathbb{N}}$ and some $n \in \mathbb{N}$ such that
$$\overline{\langle A\rangle} = \mathcal{V}(p_1,\ldots, p_n).$$
Now step $k$ of the algorithm verifies the statement
\begin{equation}\label{eq:VarietyStatement}
\forall B \in \mathcal{V}(p_1, \ldots, p_k)\colon \tr(B) \geqslant 0
\end{equation}
which is decidable by the Tarski--Seidenberg Theorem, since it is a statement in first order logic. As soon  as  \cref{eq:VarietyStatement} is true for the first time, the algorithm halts and outputs a correct yes-answer. This  will be the case after at most $n$ steps, if $A$ is a yes-instance.
\end{proof}

\begin{remark}\label{rem:generalizations} The previous statement can be generalized in two directions:
\begin{enumerate}[label=($\roman*$)] 
	\item By the same argument, the following problem is also decidable: 
		Given $A_1, \ldots A_d\in \mathrm{O}_s(\mathbb{Q})$ for a fixed matrix size $s$, decide if:
	$$\forall \ell \in \mathbb{N}\  \forall k_1, \ldots, k_\ell \in \{1, \ldots, d\}\colon \tr(A_{k_1} \cdots A_{k_\ell}) \geqslant 0.$$
	Note that this decision problem for arbitrary matrices is undecidable \cite{DCCW}.
	\item\label{generalizations:ii} The proof remains true if $\tr$ is replaced by any other polynomial function. This in particular implies that the generalized problem
	$$\forall n \in \mathbb{N}\colon \varphi(A^n) \geqslant 0$$
	is decidable.
\end{enumerate}
\end{remark}

We now generalize the result to unitary matrices, by embedding them into orthogonal matrices of larger size. We denote by $\mathbb{Q}[i]$ the field of complex numbers with rational real and imaginary parts, and we denote the set of $s \times s$ unitary matrices with entries in $\mathbb{Q}[i]$ by $\mathrm{U}_s(\mathbb{Q}[i])$.

\begin{lemma}
\label{lem:UtoO_embedding}
The map
\begin{align*}\Psi \colon \mathrm{U}_s(\mathbb{Q}[i]) &\to \mathrm{O}_{2s}(\mathbb{Q}) \\ U = A + iB &\mapsto \begin{pmatrix}
A & -B \\ B & A
\end{pmatrix}
\end{align*}
is a group homomorphism. Moreover we have 
$$\tr(U) = \frac{1}{2} \tr\left(\Psi(U) \cdot \begin{pmatrix} I_s & i I_s \\ -i I_s & I_s \end{pmatrix} \right).$$
\end{lemma}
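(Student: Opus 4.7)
The plan is to decompose $U = A + iB$ with $A, B \in \mathrm{Mat}_s(\mathbb{Q})$ and reduce each claim (well-definedness, multiplicativity, and the trace identity) to a routine block-matrix computation.

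First I would check that $\Psi$ actually lands in $\mathrm{O}_{2s}(\mathbb{Q})$. Expanding $U^*U = I_s$ with $U^* = A^T - iB^T$ yields the two real identities $A^T A + B^T B = I_s$ and $A^T B - B^T A = 0$. Multiplying the block matrix $\Psi(U)^T$ with $\Psi(U)$ produces four $s\times s$ blocks, each of which simplifies to either $I_s$ or the zero matrix by exactly these two identities. Rationality of the entries of $\Psi(U)$ is immediate since $A$ and $B$ have entries in $\mathbb{Q}$.

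Next I would verify the homomorphism property. For $U_j = A_j + iB_j$ with $j=1,2$, the product $U_1 U_2 = (A_1 A_2 - B_1 B_2) + i(A_1 B_2 + B_1 A_2)$ gives the real and imaginary parts of $\Psi(U_1 U_2)$. On the other side, the block-matrix product $\Psi(U_1)\Psi(U_2)$ yields exactly those same four blocks (with the right signs), so the two matrices coincide. The identity matrix is preserved because $\Psi(I_s) = I_{2s}$.

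Finally, for the trace identity I would carry out the single matrix multiplication on the right-hand side: the product
\[
\begin{pmatrix} A & -B \\ B & A \end{pmatrix}\begin{pmatrix} I_s & iI_s \\ -iI_s & I_s \end{pmatrix}
\]
has diagonal blocks $A + iB$ and $A + iB$, so its trace equals $2\tr(A+iB) = 2\tr(U)$, and dividing by two proves the formula. None of the steps presents a real obstacle: the entire lemma is a bookkeeping exercise verifying that the standard realification of a complex matrix behaves as expected, and the only thing to be careful about is keeping track of the signs in the off-diagonal blocks when computing $\Psi(U)^T\Psi(U)$ and $\Psi(U_1)\Psi(U_2)$.
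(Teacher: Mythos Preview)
Your proposal is correct and follows exactly the approach the paper intends: the paper's own proof simply asserts that $\Psi(U)$ is orthogonal if and only if $U$ is unitary and declares the rest immediate, and your block-matrix computations spell out precisely those routine verifications.
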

\begin{proof}
The map is well defined since $\Psi(U)$ is orthogonal if and only if $U$ is unitary. The rest is immediate.
\end{proof}

The main results of this section are summarized in the following two corollaries.

\begin{corollary}
\label{cor:moment_unitary}
For each $s\geqslant 1$, the moment positivity problem for matrices from $\mathrm{U}_s(\mathbb Q[i])$ is decidable.
\end{corollary}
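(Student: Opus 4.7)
The plan is to reduce the unitary moment positivity problem to the orthogonal one that was just handled in \cref{thm:moment_orthogonal}, using the embedding $\Psi$ from \cref{lem:UtoO_embedding}. Given $U \in \mathrm{U}_s(\mathbb{Q}[i])$, set $B \coloneqq \Psi(U) \in \mathrm{O}_{2s}(\mathbb{Q})$. Since $\Psi$ is a group homomorphism we have $\Psi(U^n) = B^n$, so the trace identity of \cref{lem:UtoO_embedding} yields, after decomposing $M = I_{2s} + iJ$ with $J = \begin{pmatrix} 0 & I_s \\ -I_s & 0 \end{pmatrix} \in \mathrm{Mat}_{2s}(\mathbb{Z})$,
$$\tr(U^n) = \tfrac{1}{2}\tr(B^n M) = \tfrac{1}{2}\tr(B^n) + \tfrac{i}{2}\tr(B^n J).$$
Both $\tfrac{1}{2}\tr(B^n)$ and $\tfrac{1}{2}\tr(B^n J)$ are real numbers since $B, J$ are real, so the complex number $\tr(U^n)$ lies in $\mathbb{R}_{\geqslant 0}$ if and only if its real part is nonnegative and its imaginary part vanishes.

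Hence the question ``$\tr(U^n) \geqslant 0$ for all $n \in \mathbb{N}$'' is equivalent to the conjunction of three generalized moment positivity statements for the orthogonal matrix $B \in \mathrm{O}_{2s}(\mathbb{Q})$:
\begin{enumerate}[label=($\roman*$)]
    \item $\varphi_1(B^n) \geqslant 0$ for all $n$, where $\varphi_1(C) = \tfrac{1}{2}\tr(C)$;
    \item $\varphi_2(B^n) \geqslant 0$ for all $n$, where $\varphi_2(C) = \tfrac{1}{2}\tr(CJ)$;
    \item $-\varphi_2(B^n) \geqslant 0$ for all $n$.
\end{enumerate}
Each $\varphi_j$ is a continuous (even $\mathbb{Q}$-linear) functional on $\mathrm{Mat}_{2s}(\mathbb{R})$, so each of (i)--(iii) is decidable by \cref{rem:generalizations}\ref{generalizations:ii}. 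Running the three decision procedures in parallel and returning the AND of their outputs decides the unitary moment positivity problem.

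The main conceptual work was already done in \cref{thm:moment_orthogonal}: controlling the semigroup closure via \cref{lem:compactAlgebraic} and invoking Tarski--Seidenberg on the resulting first-order varieties. The only new ingredient here is bookkeeping the single complex condition $\tr(U^n) \in \mathbb{R}_{\geqslant 0}$ as three real generalized moment positivity conditions, which is immediate from the explicit formula in \cref{lem:UtoO_embedding}. There is no further obstacle; in particular, the fact that compact subgroups of $\mathrm{GL}_s(\mathbb{C})$ need not be algebraic (as noted after \cref{lem:compactAlgebraic}) is harmlessly bypassed by working with $B = \Psi(U)$ in the real orthogonal group rather than with $U$ itself.
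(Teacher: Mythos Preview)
Your proof is correct and follows essentially the same approach as the paper, which simply cites \cref{lem:UtoO_embedding}, \cref{thm:moment_orthogonal}, and \cref{rem:generalizations}\ref{generalizations:ii} without further comment. You have just spelled out explicitly how to handle the complex-valued trace by splitting $\tr(U^n)\in\mathbb R_{\geqslant 0}$ into three real generalized moment conditions, a detail the paper leaves implicit.
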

\begin{proof}
It follows immediately from \cref{lem:UtoO_embedding}, \cref{thm:moment_orthogonal} and \cref{rem:generalizations} ($ii$).
\end{proof}

\begin{corollary}
The positivity problem is decidable for simple unitary LRS, i.e.\
$$
u_n = a_1 u_{n-1} + \cdots + a_s u_{n-s}
$$
with $a_1, \ldots, a_s \in \mathbb{Q}[i]$, where the roots of $p(x) = x^s - a_1 x^{s-1} - a_2 x^{s-2} - \cdots - a_{s}$
are all simple and of modulus $1$.
\end{corollary}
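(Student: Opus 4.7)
The plan is to reduce positivity of the LRS to a generalized moment positivity problem for a unitary matrix, so that \cref{cor:moment_unitary} together with \cref{rem:generalizations}~($ii$) applies. First, \cref{prop:lin_rec_equ_moment} furnishes vectors $v,w\in\mathbb{Q}[i]^s$ and the companion matrix $A\in\Mat_s(\mathbb{Q}[i])$, whose characteristic polynomial is precisely $p$, such that $u_n = v^t A^{n-s} w$ for all $n>s$.

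Since the roots $\lambda_1,\ldots,\lambda_s$ of $p$ are simple and of modulus one, $A$ is diagonalizable with spectrum on the unit circle, hence similar to a unitary matrix. Letting $K\subseteq\mathbb{C}$ be the splitting field of $p$ over $\mathbb{Q}[i]$, one obtains $S\in\mathrm{GL}_s(K)$ and $D=\diag(\lambda_1,\ldots,\lambda_s)\in\mathrm{U}_s(K)$ with $A=SDS^{-1}$. Setting $\tilde v\coloneqq S^t v$ and $\tilde w\coloneqq S^{-1}w$, this yields
$$u_n \;=\; \varphi(D^{n-s}),\qquad \varphi(M)\coloneqq \tilde v^{\,t} M \tilde w,$$
where $\varphi$ is a continuous linear functional. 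Positivity of the LRS is therefore equivalent to positivity of a generalized moment sequence of the unitary matrix $D$.

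It then remains to invoke decidability for generalized moments of unitaries. The main obstacle is that $D$ and $\varphi$ have entries in the number field $K$ rather than in $\mathbb{Q}[i]$, so \cref{cor:moment_unitary} does not apply literally. However, the proof of \cref{thm:moment_orthogonal} uses only (i) the Tarski--Seidenberg theorem, which is decidable over $\mathbb{R}$ with parameters taken from any computable subfield, and (ii) the recursive enumerability of a polynomial basis of invariants of the compact group $\overline{\langle D\rangle}$, which remains effective whenever the generators have entries in a computable field. Both ingredients persist when $\mathbb{Q}[i]$ is replaced by the (computable) number field $K$; thus the generalized moment problem for $D\in\mathrm{U}_s(K)$ is decidable, and the corollary follows.
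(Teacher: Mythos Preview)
Your argument is correct and essentially matches the paper's: both end up with the diagonal unitary $D=\diag(\lambda_1,\ldots,\lambda_s)$ over the splitting field of $p$ together with a linear functional $\varphi$, and both rely on the (valid) observation that the proof of \cref{thm:moment_orthogonal} extends verbatim from $\mathbb{Q}$ to any computable subfield of the real algebraic numbers---a point you make explicit while the paper leaves it implicit. The paper constructs $\varphi$ slightly differently, by interpolating $\varphi(D^i)=u_i$ directly on the linearly independent powers $I,D,\ldots,D^{s-1}$, which has the minor advantage of matching $u_n$ for \emph{all} $n$, whereas your companion-matrix route yields $u_n=\varphi(D^{n-s})$ only for $n>s$ and tacitly leaves the finitely many initial values $u_1,\ldots,u_s$ to be checked separately.
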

\begin{proof}
We choose a unitary matrix $A\in{\rm U}_s(\mathbb C)$ whose eigenvalues are the roots of $p$, and whose entries are computable numbers. For example, one can take a diagonal matrix with the specified roots on the diagonal. We obtain the reccurence $$A^n=a_1A^{n-1}+\cdots + a_s A^{n-s}$$ for all $n\geqslant s$, and since the roots are all simple, $p$ is actually the minimal polynomial of $A$. So $I_s,A,A^2,\ldots, A^{s-1}$ are linearly independent, and we can thus find a linear functional $\varphi$ on $\Mat_s(\mathbb C)$ with $\varphi(A^i)=u_i$ for $i=0,\ldots, s-1$. Now, by \cref{rem:generalizations} and \cref{lem:UtoO_embedding} above, it is decidable whether $\varphi(A^i)\geqslant 0$ holds for all $i$, and since this sequence fulfills the same  recurrence and initial conditions as $(u_i)_{i\geqslant 1}$, the two sequences coincide.\end{proof}

While the positivity problem is known to be $\NP$-hard for linear recurrence sequences \cite{Bl01}, the complexity of this problem for unitary LRS remains open.

\subsection{Matrices with a unique dominant eigenvalue or real eigenvalues}
\label{ssec:largestEigenvalue}

In the following, we show that for matrices with a unique  dominant eigenvalue, and  for matrices with only real eigenvalues, the moment problem is decidable. Note that the idea for the case of a unique dominating eigenvalue is already present in \cite{OW}, but restricted to  multiplicity $1$ and matrices of size at most $s=5$.
\begin{theorem} 
\label{thm:moment_decidable}
The moment positivity problem is decidable in the following cases:

\begin{enumerate}[label=$(\roman*)$]
	\item\label{cond:moment_dec1} $\mathcal R=\mathbb Q, s$ arbitrary, and the set of instances restricted to matrices with a unique dominant eigenvalue. 
	\item\label{cond:moment_dec3} $\mathcal R=\mathbb Q, s$ arbitrary, and the set of instances restricted to matrices with only real eigenvalues.
\end{enumerate}
\end{theorem}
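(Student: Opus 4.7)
My plan for both parts rests on the identity $\tr(A^n) = \sum_{i=1}^s \lambda_i^n$, where $\lambda_1, \ldots, \lambda_s$ are the eigenvalues of $A$ listed with algebraic multiplicity. Since $A$ has rational entries, the characteristic polynomial $\chi_A$ lies in $\mathbb{Q}[x]$, so I can represent the $\lambda_i$ as algebraic numbers and effectively perform root isolation, modulus comparisons, sign tests and logarithmic approximations. The general strategy is, in each case, to split $\tr(A^n)$ into a \emph{leading} contribution from the eigenvalues of largest modulus and a strictly subexponential remainder, and then reduce the positivity question to a finite initial check plus finitely many sign tests on algebraic quantities.

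For part $(i)$, let $\lambda_1$ be the unique dominant eigenvalue. Since nonreal roots of $\chi_A$ come in conjugate pairs of equal modulus, $\lambda_1 \in \mathbb{R}$. Denoting its multiplicity by $m$ and the second-largest modulus among all eigenvalues by $\rho' < |\lambda_1|$, one has
$$\tr(A^n) = m\lambda_1^n + R_n, \qquad |R_n| \leqslant (s-m)(\rho')^n,$$
so the sign of $\tr(A^n)$ agrees with that of $m\lambda_1^n$ as soon as $n$ exceeds an effectively computable threshold $N \approx \log((s-m)/m)/\log(|\lambda_1|/\rho')$. The three sign cases then settle the problem: if $\lambda_1 > 0$, the tail is eventually positive and one only inspects the moments with $n \leqslant N$; if $\lambda_1 < 0$, then $\tr(A^n)$ is negative for every sufficiently large odd $n$, so the instance is a no-instance; if $\lambda_1 = 0$, then $A$ is nilpotent, $\tr(A^n) = 0$ for $n \geqslant s$, and the problem is trivial.

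For part $(ii)$ all eigenvalues are real. I would sort their distinct absolute values as $\rho_1 > \rho_2 > \cdots > \rho_\ell \geqslant 0$ and write $p_j^{+}$, $p_j^{-}$ for the multiplicities of $+\rho_j$ and $-\rho_j$ (at least one being positive for each $j$). Then
$$\tr(A^n) = \sum_{j=1}^\ell \rho_j^{\,n}\bigl(p_j^{+} + (-1)^n p_j^{-}\bigr),$$
and I would analyse the even and odd subsequences separately. For even $n$ the leading coefficient $p_1^{+} + p_1^{-}$ is strictly positive (unless $\rho_1 = 0$, in which case $A$ is nilpotent and the problem is trivial), so even moments are eventually positive beyond an effective threshold. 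For odd $n$, let $j^\star$ be the smallest index with $p_{j^\star}^{+} \neq p_{j^\star}^{-}$: if no such $j^\star$ exists, then every odd moment vanishes; otherwise, the sign of $p_{j^\star}^{+} - p_{j^\star}^{-}$ asymptotically determines the sign of $\tr(A^n)$, again beyond an effective threshold, yielding either eventual positivity or a no-instance. In each branch, the decision reduces to explicit computation of finitely many moments.

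The main obstacle I foresee is computational rather than conceptual, namely the effectiveness of the thresholds $N$ and of the various sign and modulus comparisons. Concretely, one must approximate quantities such as $\log(|\lambda_1|/\rho')$ and $\log(\rho_{j^\star}/\rho_{j^\star+1})$ to sufficient precision while working with exact algebraic representations of the roots of $\chi_A$. These techniques are standard in computable algebraic number theory (root isolation via Sturm sequences, arithmetic on algebraic numbers), but the estimates have to be threaded through carefully to ensure that the overall algorithm provably terminates with the correct answer on every valid input.
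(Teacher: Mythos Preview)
Your proposal is correct and follows essentially the same approach as the paper: in $(i)$ you isolate the unique (necessarily real) dominant eigenvalue and bound the remainder by $(s-m)(\rho')^n$ to obtain an explicit threshold, and in $(ii)$ you separate even and odd $n$ and, for odd $n$, peel off cancelling $\pm\rho_j$ pairs to locate the first level $j^\star$ that governs the sign---which is exactly the paper's ``reduce to a smaller matrix where the dominant eigenvalues have the same sign.'' The only minor deviation is that the paper observes the even moments are \emph{automatically} nonnegative (since $\sum_i \lambda_i^{2n}\geqslant 0$ for real $\lambda_i$), whereas you argue eventual positivity plus a finite check; your extra care with the nilpotent case, the sign of $\lambda_1$, and the effectiveness of the algebraic-number computations is welcome but not a different route.
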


\begin{proof} We provide algorithms that decide the moments positivity problem for the stated instance sets. We can assume without loss of generality that $A \in \Mat_s(\mathbb{Z}),$ by possibly multiplying the matrix with the largest denominator of its entries.

For \ref{cond:moment_dec1} let $A \in \Mat_s(\mathbb Z)$ have a unique dominant eigenvalue. Since $A$ has real entries, the non-real eigenvalues of $A$ come in conjugate pairs. Since there is exactly one eigenvalue $\lambda_1$ of largest absolute value, it must be real. We  let $k$ denote its multiplicity and obtain $$|\mu_n(A) -  k\cdot \lambda_1^n| \leqslant (s-k) |\lambda_2|^n,$$
where $\lambda_2$ denotes the second largest eigenvalue in absolute value. 
Thus it suffices to check $\mu_n(A) \geqslant 0$ for $n$ up to 
$$\frac{\log(s/k-1)}{ \log(|\lambda_1|)-\log(|\lambda_2|)}.$$

\ref{cond:moment_dec3}: In this case only odd moments matter, since the even moments are always nonnegative. If the dominant eigenvalues all have the same sign, then we can apply \ref{cond:moment_dec1}. Otherwise,  since odd powers of eigenvalues with the same absolute values but different signs cancel out, we can reduce the problem to a smaller matrix, where the dominant eigenvalues do have the same sign.
\end{proof}

\subsection{Further generalizations}

In the following, we present a generalization of the statements in \cref{ssec:unitary} and \cref{ssec:largestEigenvalue}.
For a matrix $A\in \Mat_s(\mathbb R)$, we denote by $\mathsf{spec}(A)$ the multi-set of all eigenvalues of $A$ (where multiple eigenvalues are represented by multiple elements of $\mathsf{spec}(A)$). Express
$$\mathsf{spec}(A) = \mathsf{per}_1(A) \cup \mathsf{per}_2(A) \cup \cdots \cup \mathsf{per}_s(A)$$
as a partition of peripheral spectra, i.e.\ eigenvalues of the same absolute value, in decreasing order (i.e.\ $\mathsf{per}_1(A)$ contains the dominant eigenvalues, $\mathsf{per}_2(A)$ the eigenvalues of second largest absolute value...). Note that $\mathsf{per}_{i}(A)$ can be empty if $A$ has multiple eigenvalues of same absolute value.
Moreover, let
$$ \mu_n^{(i)}(A) \coloneqq \sum_{\lambda \in \mathsf{per}_i(A)} \left(\frac{\lambda}{|\lambda|}\right)^n.$$ 
We define
$$\eta_i(A) = \left\{\begin{array}{cl} \inf_{n \in \mathbb{N}} \mu^{(i)}_n(A) & \text{ : if } \mathsf{per}_i(A) \neq \emptyset \\ \infty & \text{ : if } \mathsf{per}_i(A) = \emptyset \end{array}\right.$$
and
$$\gamma_i(A) =  \left\{\begin{array}{cl} \sup_{n \in \mathbb{N}} \mu^{(i)}_{pn+q}(A) & \text{ : if } \mathsf{per}_i(A) \neq \emptyset \\ -\infty  & \text{ : if } \mathsf{per}_i(A) = \emptyset\end{array}\right.,$$ where $p,q\geqslant 1$ are arbitrary but fixed integers. So we compute the supremum  along an arithmetic progression.

\begin{lemma}\label{lem:mu_eta_decidability}
The following two problems are decidable:
\begin{enumerate}[label=$(\roman*)$]
	\item\label{prob1:lemma} Given\footnote{To assume that the inputs have a finite description, we restrict to algebraic numbers, i.e.\ numbers that can be represented as roots of an integer polynomial. This is enough to apply \cref{lem:mu_eta_decidability} in the proof of \cref{thm:generalVersion}.} $A \in \mathrm{Mat}_s(\mathbb{R})$, $c \in \mathbb{R},$ decide whether $\eta_i(A) \geqslant c.$
	\item\label{prob2:lemma} Given $A \in \mathrm{Mat}_s(\mathbb{R})$, $c \in \mathbb{R},$ decide whether $\gamma_i(A) \leqslant c.$
\end{enumerate}
\end{lemma}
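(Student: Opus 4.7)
The plan is to reduce both parts to a first-order decision problem about the closure of a cyclic subgroup of a compact algebraic group, in direct analogy with the proof of \cref{thm:moment_orthogonal}.

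First, from $A$ I compute the algebraic eigenvalues, identify $\mathsf{per}_i(A) = \{\lambda_1, \ldots, \lambda_m\}$ together with its common absolute value $r$, and form the diagonal unitary matrix
$$U \coloneqq \diag\!\left(\tfrac{\lambda_1}{r}, \ldots, \tfrac{\lambda_m}{r}\right) \in \mathrm{U}_m(\overline{\mathbb{Q}}).$$
If $\mathsf{per}_i(A) = \emptyset$, the answer is immediate. Otherwise $\mu_n^{(i)}(A) = \tr(U^n)$ for all $n$, so both parts become questions about the trace sequence of powers of a fixed algebraic unitary matrix. Using the embedding $\Psi$ of \cref{lem:UtoO_embedding}, this becomes a question about $\tr\bigl(\Psi(U)^n K\bigr)$, where $\Psi(U) \in \mathrm{O}_{2m}$ has real algebraic entries and $K$ is the fixed matrix from that lemma.

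Next, I claim that \cref{lem:compactAlgebraic} generalizes verbatim to orthogonal matrices with algebraic (rather than rational) entries, since its proof relies only on compactness, linear algebra in the polynomial coefficients, and Hilbert's basis theorem, all of which remain effective over any computable subfield of $\mathbb{R}$. Hence $\mathcal{G} \coloneqq \overline{\langle \Psi(U) \rangle}$ is a compact algebraic group, and one can recursively enumerate polynomials $(p_k)_{k\in\mathbb{N}}$ with algebraic coefficients cutting out $\mathcal{G}$, with the enumeration stabilizing after finitely many polynomials. An analogous statement holds for $\overline{\langle \Psi(U)^p \rangle}$.

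With these preparations, part \ref{prob1:lemma} is settled by running two semi-decision procedures in parallel: the first iterates $n$, computes the algebraic number $\mu_n^{(i)}(A)$, and halts as soon as it finds a violation of $\geqslant c$ (order comparison of algebraic numbers being decidable); the second iterates $k$ and uses the Tarski--Seidenberg theorem to decide the first-order statement
$$\forall B \in \mathcal{V}(p_1, \ldots, p_k):\ \tfrac{1}{2}\tr(BK) \geqslant c,$$
which holds for all sufficiently large $k$ if and only if $\eta_i(A) \geqslant c$. Part \ref{prob2:lemma} is handled by the same structure with $\Psi(U)$ replaced by $\Psi(U)^p$ inside the closure and the inequality $\tfrac{1}{2}\tr\bigl(\Psi(U)^q B K\bigr) \leqslant c$ checked instead, exploiting the factorization $U^{pn+q} = U^q \cdot (U^p)^n$.

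The main obstacle is justifying the generalization of \cref{lem:compactAlgebraic} to algebraic entries and confirming that Tarski--Seidenberg still applies. Both reduce to the fact that the real algebraic numbers form a computable real-closed subfield of $\mathbb{R}$, so every algorithmic step used above (computing eigenvalues and absolute values, linear algebra over $\overline{\mathbb{Q}} \cap \mathbb{R}$, order comparison, and quantifier elimination) remains effective.
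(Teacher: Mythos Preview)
Your proposal is correct and follows essentially the same approach as the paper: both run a direct search for a violation in parallel with a Tarski--Seidenberg check on increasingly fine algebraic descriptions of $\overline{\langle U\rangle}$ (respectively $\overline{\langle U^p\rangle}$, with the extra factor $U^q$ in the trace). You are in fact more careful than the paper in two places---you explicitly pass through the embedding $\Psi$ of \cref{lem:UtoO_embedding} to get a real orthogonal matrix, and you flag that \cref{lem:compactAlgebraic} must be invoked over real algebraic (not just rational) entries---whereas the paper's proof leaves these points implicit.
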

\begin{proof}
The decision algorithms are very similar to one from the proof of \cref{thm:moment_orthogonal}. To construct an algorithm for ($i$), let the following two procedures run in parallel:
\begin{enumerate}[label=($\alph*$)]
	\item\label{alg1a} Evaluate $\mu_n^{(i)}(A)$ for increasing $n \in \mathbb{N}$. Halt if $\mu_n^{(i)}(A) < c$.
	\item\label{alg2a} Check the statement
	$$ \forall B \in \mathcal{V}(p_1, \ldots, p_k)\colon \tr(B) \geqslant c$$
	for increasing $k \in \mathbb{N}$, where $(p_\ell)_{\ell \in \mathbb{N}}$ define the variety $\overline{\langle U \rangle},$ where $U$ is the diagonal matrix with eigenvalues $\lambda/|\lambda|$ for $\lambda \in \mathsf{per}_i(A)$. Halt if the statement is true.
\end{enumerate}
If $A,c$ is a no-instance of ($i$), then ($a$) will eventually halt; if $A,c$ is a yes-instance, ($b$) will eventually halt, for the same reason as in the proof of \cref{thm:moment_orthogonal}.

The algorithm for ($ii$)  is very similar. Let the following two procedures run in parallel:
\begin{enumerate}[label=($\alph*$)]
	\item\label{alg1b} Evaluate $\mu_{pn+q}^{(i)}(A)$ for increasing $n \in \mathbb{N}$. Halt if $\mu_{pn+q}^{(i)}(A) > c$.
	\item\label{alg2b} Check  the statement
	$$ \forall B \in \mathcal{V}(p_1, \ldots, p_k)\colon \tr(U^qB) \leqslant c$$
	for increasing $k \in \mathbb{N}$, where $(p_\ell)_{\ell \in \mathbb{N}}$ define the group $\overline{\langle U^p \rangle},$ where $U$ is the diagonal matrix with eigenvalues $\lambda/|\lambda|$ for $\lambda \in \mathsf{per}_i(A)$. Halt if the statement is true.
\end{enumerate}
In ($b$) we only evaluate odd moments; recall  \cref{rem:generalizations} ($ii$). 
\end{proof}

It is unclear whether $\eta_i(A) > c$ or $\eta_i(A) = c$ is decidable. This is due to the fact that we do not know whether $\mu_n^{(i)}(A)$ attains the infimum/supremum for finite $n$.

\begin{theorem}\label{thm:generalVersion}
For a fixed parameter $\varepsilon > 0$, the moment positivity problem is decidable for all non-zero matrices $A$ satisfying one of the following conditions:
\begin{enumerate}[label=$(\roman*)$]
	\item\label{cond:generalVers1} $\exists k \in \mathbb{N}\colon \eta_1(A), \ldots, \eta_k(A) \geqslant 0, \eta_{k+1}(A) \geqslant \varepsilon.$
	\item\label{cond:generalVers2} $\exists k \in \mathbb{N}\colon \gamma_1(A), \ldots, \gamma_k(A) \leqslant 0, \gamma_{k+1}(A) \leqslant -\varepsilon.$
	\item\label{cond:generalVers3} $\eta_1(A) < 0.$
\end{enumerate}
If $(ii)$ or $(iii)$ are satisfied, then $A$ is automatically a no-instance. If $(i)$ is satisfied, then $A$ can be a yes or a no-instance. Moreover, each of the above criteria is decidable.
\end{theorem}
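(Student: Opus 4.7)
The plan is to base everything on the exact decomposition
$$\tr(A^n)=\sum_{i=1}^s r_i^n\,\mu_n^{(i)}(A),$$
where $r_i$ denotes the common modulus of the eigenvalues in $\mathsf{per}_i(A)$ (with $r_1>r_2>\cdots$), and $\mu_n^{(i)}(A)\in\mathbb{R}$ satisfies $|\mu_n^{(i)}(A)|\leqslant s$. The role of the three conditions is to control the signs of the leading terms of this sum, and the proof splits into a decidability argument for the criteria and a case-by-case correctness argument.

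Decidability of each criterion is immediate from \cref{lem:mu_eta_decidability}: there are at most $s$ nonempty peripheral spectra, so the existential over $k$ ranges over $\{0,\ldots,s-1\}$, and every subclause is a query $\eta_i(A)\geqslant c$ or $\gamma_i(A)\leqslant c$ for $c\in\{0,\pm\varepsilon\}$ that the lemma decides (condition \ref{cond:generalVers3} is the negation of $\eta_1(A)\geqslant 0$).

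For condition \ref{cond:generalVers1}, I would combine the nonnegativity of $\mu_n^{(i)}(A)$ for $i\leqslant k$, the strict lower bound $\mu_n^{(k+1)}(A)\geqslant\varepsilon$, and the crude estimate $|\mu_n^{(i)}(A)|\leqslant s$ for $i\geqslant k+2$ to obtain
$$\tr(A^n)\geqslant\varepsilon\, r_{k+1}^n-s^2\, r_{k+2}^n,$$
which is strictly positive for all $n\geqslant N:=\lceil\log(s^2/\varepsilon)/\log(r_{k+1}/r_{k+2})\rceil$. The algorithm then simply tests $\tr(A^n)\geqslant 0$ for the finitely many $n\leqslant N$. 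The degenerate case where no stratum below $k+1$ exists is even easier because the noise term then vanishes outright.

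Conditions \ref{cond:generalVers2} and \ref{cond:generalVers3} should both automatically force $A$ to be a no-instance, so the algorithm returns ``no''. Condition \ref{cond:generalVers2} is the mirror of \ref{cond:generalVers1} applied along the arithmetic progression $pn+q$, yielding $\tr(A^{pn+q})\leqslant-\varepsilon\, r_{k+1}^{pn+q}+s^2\, r_{k+2}^{pn+q}<0$ for large $n$. The delicate part is \ref{cond:generalVers3}: $\eta_1(A)<0$ provides some $n_0$ with $\mu_{n_0}^{(1)}(A)<0$, but what is needed is such an $n_0$ also large enough for the $i=1$ term to swamp the lower-order contributions $O(r_2^n)$. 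This is the main obstacle, and I would resolve it via the Weyl/Kronecker description of the orbit $\bigl\{\bigl((\lambda/|\lambda|)^n\bigr)_{\lambda\in\mathsf{per}_1(A)}\bigr\}_{n\in\mathbb{N}}\subseteq\mathbb{T}^{|\mathsf{per}_1(A)|}$: its closure is a compact topological subgroup, and any tail of the orbit is dense in this closure, hence the set $\{n:\mu_n^{(1)}(A)<\eta_1(A)/2\}$ has arbitrarily large elements. Choosing such an $n$ large enough that $s\, r_2^n$ is negligible compared with $|\eta_1(A)/2|\,r_1^n$ yields $\tr(A^n)<0$, confirming the no-instance.
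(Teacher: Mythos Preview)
Your proposal is correct and follows essentially the same approach as the paper: the same spectral decomposition $\tr(A^n)=\sum_i r_i^n\,\mu_n^{(i)}(A)$, the same reduction of decidability of the criteria to \cref{lem:mu_eta_decidability}, the same eventual-positivity bound in case~\ref{cond:generalVers1} (the paper uses $sd$ in place of your $s^2$, but the argument is identical), the mirrored estimate along the arithmetic progression for~\ref{cond:generalVers2}, and for~\ref{cond:generalVers3} the same appeal to the compact-group structure of the orbit closure to produce arbitrarily large $n$ with $\mu_n^{(1)}(A)$ near the infimum. The paper phrases the last step via the dichotomy ``finite or no isolated points'' for $\overline{\{U^n\}}$, which is exactly your tail-density observation in slightly different clothing.
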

\begin{proof}
First, checking whether $A$ satisfies ($i$), ($ii$) or ($iii$) is decidable by \cref{lem:mu_eta_decidability}, since there are only finitely many of these statements to check.

To prove ($i$), assume that $\eta_{k+1}(A) \neq \infty$ (the other case is trivial). Let $\lambda_i \in \mathsf{per}_i(A)$. We have that
$$\mu_n(A) = \sum_{i=1}^d |\lambda_i|^n \mu_n^{(i)}(A) \geqslant |\lambda_{k+1}|^n \left( \varepsilon - s \sum_{i=k+2}^d \left(\frac{|\lambda_{i}|}{|\lambda_{k+1}|}\right)^n \right)$$
which is positive for $$n\geqslant \frac{\log(\varepsilon)-\log(sd)}{\log(\vert\lambda_{k+2}\vert)-\log(\vert\lambda_{k+1}\vert)}.$$ So we only need to check finitely many instances of the problem.

For ($ii$) we have that
$$\mu_m(A) = \sum_{i=1}^d |\lambda_i|^m \mu_m^{(i)}(A) < |\lambda_{k+1}|^m \left( - \varepsilon + s \sum_{i=k+2}^d \left(\frac{|\lambda_{i}|}{|\lambda_{k+1}|}\right)^m \right).$$
Now there clearly exists some $m$ of the form $pn+q$ such that the right hand side is negative.

For ($iii$) note that $\eta_1(A) < 0$ is decidable since $\eta_1(A) \geqslant 0$ is decidable by \cref{lem:mu_eta_decidability}. 
Let $0 < \delta < -\eta_1(A)$. Then there exists an increasing sequence $(n_{\ell})_{\ell \in \mathbb{N}}$ such that $\mu_{n_{\ell}}^{(1)}(A) < \eta_1(A) + \delta < 0$ for all $\ell$.\footnote{This follows from the fact that for a unitary matrix $U$ the group $\overline{\{U^n\mid n \in \mathbb{N}\}}$ is either finite or contains no isolated points. Because if the set contains an isolated point, then all elements are isolated. But a compact set which contains only isolated points is finite. Hence there exists an increasing sequence $(n_\ell)_\ell$ such that $$\eta_i(U) \leqslant \tr(U^{n_\ell}) \leqslant \eta_i(U) + \frac{1}{\ell}.$$}
Therefore we have
$$\mu_{n_\ell}(A) = \sum_{i=1}^d |\lambda_i|^{n_\ell} \mu_{n_\ell}^{(i)}(A) < |\lambda_1|^{n_{\ell}} \left(\eta_1(A) + \delta +  s \sum_{i=2}^d \left(\frac{|\lambda_{i}|}{|\lambda_{1}|}\right)^{n_\ell}\right).$$
Again there exists $\ell_0$ such that $\mu_{n_{\ell_0}}(A) < 0$.
\end{proof}

\section{Undecidable cases}
\label{sec:undecidable}

We now present two finitely generated rings for which the moment membership problem is undecidable. Specifically, in \cref{ssec:commAlgebra} we prove that the moment membership problem is undecidable for the ring of commutative polynomials $\mathcal{R} = \mathbb{Z}[x_1, \ldots, x_d]$ if $n$ is sufficiently large. In \cref{ssec:non-commAlgebra}, we show that the moment membership problem is also undecidable for the space of non-commutative polynomials $\mathcal{R} = \mathbb{Z}\langle z_1, \ldots, z_d \rangle$. For a similar undecidable problem including nonnegative and sos polynomials over infinitely many variables, we refer to \cite[Section 6]{De21b}.

\subsection{Commutative polynomial rings}
\label{ssec:commAlgebra}

In the following, we show that the generalized moment membership problem for  $\mathcal R=\mathbb{Z}[x_1, \ldots, x_d]$ and the cone
$$\mathcal{P}_{\rm coeff} \coloneqq \{p \in \mathbb{Z}[x_1, \ldots, x_d]\mid  \text{all coefficients of } p \text{ are nonnegative}\}$$
is undecidable. The proof relies on the following undecidable problem.

\begin{lemma}\label{lem:trace_undec} For large enough values of $s$ and $d$, and a suitable matrix $N \in \mathrm{Mat}_s(\mathbb{Z}),$  the following 
problem is undecidable: Given $A_1, \ldots, A_d \in \mathrm{Mat}_s(\mathbb{Z})$, do there exist  $ n_1, \ldots, n_d \in \mathbb{N}$ with $$\tr(A_1^{n_1} \cdot A_2^{n_2} \cdots A_d^{n_d} \cdot N) < 0?$$
\end{lemma}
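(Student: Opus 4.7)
The plan is to reduce from Hilbert's tenth problem in a bounded-complexity form. By Matiyasevich's theorem, with J.~P.~Jones' refinement fixing a maximal total degree (say $D=4$) at the cost of increasing the number of variables, there exists a fixed pair $(d,D)$ such that the problem ``given $q\in\mathbb Z[x_1,\ldots,x_{d}]$ of total degree $\leqslant D$, does $q$ admit a zero in $\mathbb N^{d}$?'' is undecidable. A standard quadratic trick converts this into a strict negativity question: setting $p:=q^2-1$, one has $\exists n\in\mathbb N^{d}\colon q(n)=0$ iff $\exists n\in\mathbb N^{d}\colon p(n)<0$, with $\deg p\leqslant 2D$. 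It then suffices to encode any such integer polynomial $p$ as $\tr(A_1^{n_1}\cdots A_{d}^{n_{d}}\,N)$ for matrices $A_i\in\Mat_s(\mathbb Z)$, with $s$ and $N$ depending only on the fixed pair $(d,D)$, not on the particular $q$.

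For the encoding, the plan is to direct-sum blocks, one per monomial in the binomial-basis expansion of $p$. The change of basis between monomial and binomial bases (via Stirling numbers) has integer coefficients, so we may write $p(n)=\sum_\alpha c_\alpha \prod_{i=1}^{d}\binom{n_i}{\alpha_i}$ with $c_\alpha\in\mathbb Z$, and only finitely many $\alpha$ of total degree $\leqslant 2D$ occur. Inside the $\alpha$-block, let $A_i$ act as the identity on all tensor factors except the $i$-th, where it acts as a Jordan block $J_{\alpha_i+1}$ of size $\alpha_i+1$ with ones on diagonal and super-diagonal. Using $(J_k^n)_{i,j}=\binom{n}{j-i}$ together with the tensor structure, the top-right entry of $A_1^{n_1}\cdots A_d^{n_d}$ in the $\alpha$-block equals $\prod_i\binom{n_i}{\alpha_i}$.

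The key constraint is that $N$ is fixed in advance while the coefficients $c_\alpha$ form part of the input, so the $c_\alpha$ must be injected into the $A_i$ rather than into $N$. The plan is to take $N$ to be the direct sum of canonical ``extraction'' matrices $E_{s_\alpha,1}$ across all $\alpha$-blocks (which simply read off the top-right entry of each block), and to rescale the $(1,2)$-entry of the first Jordan factor $J_{\alpha_1+1}$ in the $\alpha$-block by the target integer $c_\alpha$ (replacing one super-diagonal $1$ by $c_\alpha$). A direct computation shows this multiplies the relevant tensor entry by exactly $c_\alpha$, so $\tr(A_1^{n_1}\cdots A_d^{n_d}\,N) = \sum_\alpha c_\alpha \prod_i\binom{n_i}{\alpha_i} = p(n)$.

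The main obstacle is keeping $s$, $d$, and $N$ uniform across all instances, as required by the statement of the lemma. This succeeds because the bound $s\leqslant \sum_{|\alpha|\leqslant 2D}\prod_i(\alpha_i+1)$ depends only on the fixed pair $(d,D)$, so a single $s$ and a single $N$ (with the appropriate block structure) work for every input polynomial. The name of the referenced appendix (``matrix mortality'') suggests that the authors likely proceed via a tailored mortality-type reduction instead, but either route establishes the same conceptual point: the highly restricted family of products $\{A_1^{n_1}\cdots A_d^{n_d}\,N:n\in\mathbb N^d\}$ is already expressive enough to encode an undecidable set as soon as $s$ and $d$ are chosen large enough.
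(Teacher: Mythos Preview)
Your approach is genuinely different from the paper's. The paper reduces from a restricted matrix mortality problem (decide whether $A_1^{n_1}\cdots A_d^{n_d}=0$ for some $n\in\mathbb N^d$), known to be undecidable by Bell et al., via the construction $B_i=(A_i\otimes A_i)\oplus[1]$ for $i\leqslant d$, $B_{d+1}=I_{s^2}\oplus[-1]$, together with a fixed $N$ for which $\tr(YN)$ reads off the squared Frobenius norm of the top block plus the scalar bottom block. One obtains $\tr(B_1^{n_1}\cdots B_{d+1}^{n_{d+1}}N)=\lVert A_1^{n_1}\cdots A_d^{n_d}\rVert_F^2+(-1)^{n_{d+1}}$, which is negative for some exponents iff the mortality instance is a yes-instance. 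Your route through Hilbert's tenth problem with a Jordan-block encoding of the binomial-basis coefficients is valid and more self-contained, trading the black-box mortality result for the better-known Matiyasevich--Jones bounds; the price is a considerably larger fixed $s$.

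There is, however, a small but real gap in your encoding. The device ``rescale the $(1,2)$-entry of the first Jordan factor $J_{\alpha_1+1}$ by $c_\alpha$'' is undefined whenever $\alpha_1=0$, since that factor is then $1\times1$; in particular it cannot handle the constant term $\alpha=0$ at all, and the binomial constant term $c_0=p(0,\ldots,0)=q(0)^2-1$ genuinely depends on the instance, so it cannot be absorbed into $N$. For $\alpha\neq 0$ you can simply inject $c_\alpha$ into the first factor with $\alpha_j\geqslant 1$, but for $\alpha=0$ you need a separate mechanism. One clean fix: add a fresh variable $x_0$ and encode $x_0\cdot p(x_1,\ldots,x_d)$ instead. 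Every binomial term then has $\alpha_0=1$, so $c_\alpha$ can always go into the $0$-th Jordan factor, and the resulting trace equals $n_0\,p(n_1,\ldots,n_d)$, which is negative for some $(n_0,n)\in\mathbb N^{d+1}$ iff $p(n)<0$ for some $n\in\mathbb N^d$. With this patch your argument goes through, and your closing remark correctly anticipates that the authors take the mortality route instead.
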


In order to prove \cref{lem:trace_undec}, we use a similar proof idea as in \cite{DCCW, Kl22}, which is based on a reduction from an undecidable version of the matrix mortality problem. We start with a version of the matrix mortality problem known to be undecidable.

\begin{problem}[Matrix Mortality version]\label{prob:mortality_version}
Let $A_1, \ldots, A_d \in \mathrm{Mat}_s(\mathbb{Z})$. Does there exist a choice of $n_1, \ldots, n_d \in \mathbb{N}$ such that
$$ A_1^{n_1} \cdot A_2^{n_2} \cdots A_{d}^{n_d} = 0?$$
\end{problem}

\begin{proposition}\label{thm:mortality_version}
If $s$ and $d$ are large enough, then \cref{prob:mortality_version} is undecidable.
\end{proposition}
For a proof of \cref{thm:mortality_version} we refer to \cite{Be08}.
We are now ready to prove \cref{lem:trace_undec}.

\begin{proof}[Proof of \cref{lem:trace_undec}]
We provide a reduction from \cref{prob:mortality_version}.
First, fix the matrix 
$$ N = \begin{pmatrix} \mathbf{0} & 0 \\ 0 & 1 \end{pmatrix} + \sum_{i,j = 1}^{s} \begin{pmatrix} E_{ij} \otimes E_{ij} & 0 \\ 0 & 0\end{pmatrix} \in \Mat_{s}(\mathbb{Z})^{\otimes 2} \oplus \mathbb{Z} \subseteq  \Mat_{s^2 + 1}(\mathbb{Z})$$
where $E_{ij} = e_{i} e_{j}^t$ with $e_k$ being the $k$th standard vector. For every matrix in $\Mat_{s^2 + 1}(\mathbb{Z})$ of the form
$$Y = \begin{pmatrix} X \otimes X & 0 \\ 0 & a \end{pmatrix}$$
we have 
$$ \tr(YN) = a + \sum_{i,j = 1}^{s} X_{ij}^2.$$
For an instance $A_1, \ldots, A_d \in \mathrm{Mat}_s(\mathbb{Z})$ of \cref{prob:mortality_version}, define the following $d+1$ matrices:
$$B_i = \begin{pmatrix}
A_i \otimes A_i & 0 \\ 0 & 1
\end{pmatrix} \quad \text{ for } i = 1, \ldots, d$$
and
$$B_{d+1} = \begin{pmatrix} I_{s}\otimes I_s & 0 \\ 0 & -1 \end{pmatrix}$$
where $I_s$ is the identity matrix of size $s$.

Let $n_1, \ldots, n_d \in \mathbb{N}$ be such that 
$$A_1^{n_1} \cdots A_d^{n_d} = 0.$$
Choosing $n_{d+1}=1$ we obtain
$$\tr\left(B_1^{n_1} \cdots B_{d+1}^{n_{d+1}} \cdot N\right) = -1 + \sum_{i,j = 1}^{s} \left(A_1^{n_1} \cdots A_d^{n_d}\right)_{ij}^2 = -1 < 0.$$
Conversely, let $n_1, \ldots, n_{d+1} \in \mathbb{N}$ be such that $\tr(B_1^{n_1} \cdots B_{d+1}^{n_{d+1}} \cdot N)  < 0$. This is only possible for $n_{d+1}$ odd  and 
$$\sum_{i,j = 1}^{s} \left(A_1^{n_1} \cdots A_d^{n_d}\right)_{ij}^2 = 0,$$
which implies  $A_1^{n_1} \cdots A_d^{n_d} = 0$.
\end{proof}

We now state the main problem and theorem of this section, namely the membership problem of the moment problem for matrices over non-commutative rings.

\begin{problem} 
\label{prob:moment_alt}
Let $M \in \mathrm{Mat}_s(\mathcal{R})$ be a fixed matrix.
For an input $A \in \mathrm{Mat}_s(\mathcal{R})$, decide whether
$$ \tr(A^n \cdot M) \in \mathcal{P}$$ holds for all $n\geqslant 1$.
\end{problem}

\begin{theorem}\label{thm:undec-com}
If $s, d \in \mathbb{N}$ are large enough, and $M$ is chosen suitably, then \cref{prob:moment_alt} is undecidable for $\mathcal{R} = \mathbb{Z}[x_1, \ldots, x_d]$ and $\mathcal{P}_{\rm coeff}$.
\end{theorem}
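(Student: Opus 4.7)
The plan is to reduce the complement of the undecidable problem in \cref{lem:trace_undec} to \cref{prob:moment_alt}; since undecidability is preserved under complementation, it suffices to realise the universal statement ``$\tr(A_1^{n_1} A_2^{n_2}\cdots A_d^{n_d}N)\geqslant 0$ for all $(n_1,\ldots,n_d)\in\mathbb{N}^d$'' as an instance of moment positivity over $\mathbb{Z}[x_1,\ldots,x_d]$ with target cone $\mathcal{P}_{\rm coeff}$. The idea is a block-matrix construction that turns the non-commutative product $A_1^{n_1}\cdots A_d^{n_d}$ (which for each fixed tuple lies in a single power of a polynomial matrix) into the coefficient of a distinct commuting monomial $x_1^{n_1}\cdots x_d^{n_d}$.

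Given $A_1,\ldots,A_d\in\Mat_{s_0}(\mathbb{Z})$ from the lemma, I would build the $ds_0\times ds_0$ block upper bidiagonal matrix
\[
A \coloneqq
\begin{pmatrix}
x_1 A_1 & I_{s_0} & & & \\
& x_2 A_2 & I_{s_0} & & \\
& & \ddots & \ddots & \\
& & & x_{d-1} A_{d-1} & I_{s_0} \\
& & & & x_d A_d
\end{pmatrix}\in\Mat_{ds_0}\bigl(\mathbb{Z}[x_1,\ldots,x_d]\bigr),
\]
and take $M\in\Mat_{ds_0}(\mathbb{Z})$ to be the fixed matrix with $N$ in the $(d,1)$-block and zeros elsewhere. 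A straightforward induction on $n$ — or a walk-counting argument along the block indices — gives
\[
(A^n)_{1,d} \;=\; \sum_{\substack{(n_1,\ldots,n_d)\in\mathbb{N}^d \\ n_1+\cdots+n_d\,=\,n-(d-1)}} x_1^{n_1}\cdots x_d^{n_d}\,A_1^{n_1}A_2^{n_2}\cdots A_d^{n_d}
\]
for $n\geqslant d-1$, and $0$ otherwise. The combinatorial point is that a walk of length $n$ from block $1$ to block $d$ must use exactly $d-1$ superdiagonal jumps (each contributing an identity factor) and $n-(d-1)$ diagonal ``stays'', which are distributed in a unique increasing order across the $d$ blocks.

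Applying the trace together with the structure of $M$ yields
\[
\tr(A^n M) \;=\; \tr\bigl((A^n)_{1,d}\cdot N\bigr) \;=\; \sum_{\substack{n_1+\cdots+n_d\,=\,n-(d-1)\\ n_i\geqslant 0}} x_1^{n_1}\cdots x_d^{n_d}\,\tr\bigl(A_1^{n_1}\cdots A_d^{n_d}\,N\bigr).
\]
Since the monomials $x_1^{n_1}\cdots x_d^{n_d}$ are $\mathbb{Z}$-linearly independent, this polynomial lies in $\mathcal{P}_{\rm coeff}$ if and only if every trace coefficient is nonnegative. Sweeping $n$ over $\{1,2,3,\ldots\}$ visits each tuple $(n_1,\ldots,n_d)\in\mathbb{N}^d$ exactly once (the values $n<d-1$ giving the trivially admissible polynomial $0$), so $\tr(A^n M)\in\mathcal{P}_{\rm coeff}$ for all $n\geqslant 1$ precisely when $\tr(A_1^{n_1}\cdots A_d^{n_d}N)\geqslant 0$ holds for every tuple in $\mathbb{N}^d$. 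A decision procedure for \cref{prob:moment_alt} would therefore decide the complement of \cref{lem:trace_undec}, contradicting its undecidability.

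The only delicate step is the block-matrix bookkeeping: verifying the closed form for $(A^n)_{1,d}$ and confirming that pairing $A^n$ with the extraction mask $M$ isolates exactly the $(1,1)$ diagonal block of $A^n M$, whose trace is $\tr((A^n)_{1,d}N)$. Once this is in hand, the reduction is immediate, with $s=ds_0$ and $M$ depending only on the fixed data $(d,s_0,N)$ supplied by \cref{lem:trace_undec}.
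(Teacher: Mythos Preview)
Your reduction is correct and follows the same high-level plan as the paper---encode the ordered product $A_1^{n_1}\cdots A_d^{n_d}$ as the coefficient of the monomial $x_1^{n_1}\cdots x_d^{n_d}$ in a suitable moment---but the block matrix you use is genuinely different. The paper takes the \emph{full} block upper-triangular matrix
\[
A=\sum_{i=1}^d\Bigl(\sum_{j\leqslant i}e_je_i^t\Bigr)\otimes A_i\,x_i,\qquad M=mm^t\otimes N,\quad m=(1,\ldots,1)^t,
\]
so that the coefficient of $x_1^{n_1}\cdots x_d^{n_d}$ in $\tr(A^nM)$ is $c_{n_1,\ldots,n_d}\cdot\tr(A_1^{n_1}\cdots A_d^{n_d}N)$ with $c_{n_1,\ldots,n_d}=\min\{i:n_i\neq0\}\geqslant1$. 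Your bidiagonal construction (diagonal blocks $x_iA_i$, superdiagonal identities, mask $M$ supported on the $(d,1)$ block) instead produces each monomial with coefficient exactly $1$, at the cost of shifting the degree by $d-1$. Both land in $\Mat_{ds_0}(\mathcal R)$ with a fixed $M$ depending only on $(d,s_0,N)$, so neither gains in size; your version has the cleaner combinatorics (a single path per tuple) and avoids the extraneous multiplicity $c_{n_1,\ldots,n_d}$, while the paper's version keeps $A$ homogeneous of degree~$1$ in the variables. The only edge case to mention is that your sweep over $n\geqslant1$ also hits the all-zero tuple (at $n=d-1$), giving the constant $\tr(N)$; this is harmless since the specific $N$ of \cref{lem:trace_undec} has $\tr(N)>0$, or alternatively since the all-zero tuple is a legitimate instance of the lemma.
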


\begin{proof}
Given $A_1,\ldots, A_d\in \Mat_{s}(\mathbb Z)$, set
$$A =\sum_{i=1}^d \left(\sum_{1 \leqslant j \leqslant i} e_j e_i^t  \right) \otimes A_i \cdot x_i \in \Mat_{ds}(\mathcal R).$$
Moreover, define
$$ M = m m^t \otimes N $$
with $m = (1, \ldots, 1)$ and $N$ as in \cref{lem:trace_undec}. We have that
\begin{align*}
\tr(A^n M) &= \sum_{1 \leqslant i_1 \leqslant \cdots \leqslant i_n \leqslant d} i_1\cdot  \tr(A_{i_1} \cdots A_{i_n} \cdot N) \cdot x_{i_1} \cdots x_{i_n}
\\ &= \sum_{n_1 + \cdots + n_d = n}\underbrace{c_{n_1,\ldots, n_d}}_{\geqslant 1} \cdot \tr(A_1^{n_1} \cdots A_{d}^{n_d} \cdot N)\cdot x_1^{n_1} \cdots x_d^{n_d}
\end{align*} where $c_{n_1,\ldots, n_d}=\min\{i\mid n_i\neq 0\}.$
Thus \cref{prob:moment_alt} reduces to the undecidable problem from \cref{lem:trace_undec}.
\end{proof}

\begin{remark} Since the sequence $\tr(A^nM)$ is clearly a LRS (see \cref{lem:mom_to_lin_rec}), the previous result shows that positivity of LRS over $\mathcal R=\mathbb Z[x_1,\ldots,x_d]$ is undecidable in general.
\end{remark}

\subsection{Non-commutative polynomial rings}
\label{ssec:non-commAlgebra}

We now consider  the ring $\mathcal R=\mathbb Z\langle z_1,\ldots, z_d\rangle$ of non-commutative polynomials, and show that its moment membership problem is undecidable for the cone of polynomials with positive coefficients.
As a $\mathbb Z$-module,  a basis of $\mathcal R$ consists of all words in the letters $z_1,\ldots, z_d$ where the order of letters {\it does} matter. Concatenation of words extends to a multiplication making $\mathcal R$ a unital ring, where $1$ corresponds to the empty word. 
There is a slightly different way to define this object, namely just as the tensor algebra (cf.\ \cref{ex:MPO_moment})
$$\mathbb Z\langle z_1,\ldots, z_d\rangle= T(\mathbb Z^d).$$ 
The equivalence of definitions is apparent when identifying a word $z_{k_1}\cdots z_{k_m}$ with the element $e_{k_1}\otimes \cdots \otimes e_{k_m}\in \left(\mathbb Z^d\right)^{\otimes m},$ where $e_r$ denotes the $r$-th standard basis vector in $\mathbb Z^d$.

We equip $\mathcal R$ with two (a priori) different sets of positive elements: 
$$\begin{aligned}\mathcal P_{\rm coeff} &\coloneqq \mathbb Z_{\geqslant  0}\langle z_1,\ldots, z_d\rangle=\left\{ p\in \mathbb Z\langle z_1,\ldots, z_d\rangle\mid\mbox{all coefficients of } p \mbox{ are nonnegative}\right\}\\
\mathcal P_{\rm eval} &\coloneqq \left\{ p\in \mathbb Z\langle z_1,\ldots, z_d\rangle\mid\forall \ell, A_1,\ldots, A_d\in \Mat_\ell(\mathbb Z_{\geqslant 0})\colon p(A_1,\ldots, A_d)\in \Mat_\ell(\mathbb Z_{\geqslant 0})\right\}. \end{aligned}$$
We now show that both cones coincide, which is a free version of P\'olya's Theorem.

\begin{theorem}[Free P\'olya's Theorem]\label{thm:freePoyla}
Let $p\in\mathbb C\langle z_1,\ldots, z_d\rangle$ with $m \coloneqq \deg(p)$. Then the following are equivalent:
\begin{enumerate}[label=(\roman*)]
\item\label{cond:Polya1} All coefficients of $p$ are nonnegative reals.
\item\label{cond:Polya2} For all  $A_1,\ldots, A_d\in \Mat_{m+1}(\mathbb Z_{\geqslant 0})$ we have
$$p(A_1,\ldots, A_d)\in \Mat_{m+1}(\mathbb R_{\geqslant 0}).$$
\end{enumerate} In particular, $\mathcal P_{\rm coeff}=\mathcal P_{\rm eval},$ and in the definition of $\mathcal P_{\rm eval}$ one can restrict $\ell$ to $\deg(p)+1.$
\end{theorem}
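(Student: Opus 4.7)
The plan is to prove both directions separately. The easy direction (i) $\Rightarrow$ (ii) is immediate: a product of entrywise nonnegative matrices is entrywise nonnegative, and a nonnegative linear combination of entrywise nonnegative matrices remains entrywise nonnegative. So I concentrate on (ii) $\Rightarrow$ (i).

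The strategy is to construct, for every word $w_0 = z_{k_1}\cdots z_{k_n}$ with $n\leqslant m$, specific $0/1$ matrices $A_1,\ldots, A_d \in \Mat_{m+1}(\mathbb{Z}_{\geqslant 0})$ such that one designated entry of $p(A_1,\ldots,A_d)$ equals precisely the coefficient $c_{w_0}$ of $w_0$ in $p$. Since the hypothesis (ii) forces this entry to be nonnegative, we conclude $c_{w_0}\geqslant 0$ for every word of length at most $m=\deg(p)$, which is exactly (i). Concretely, I would define $A_j$ as the matrix with $A_j(r,r+1)=1$ whenever $k_r=j$, and all other entries zero.

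The key computation is that the $(1,n+1)$-entry of $p(A_1,\ldots,A_d)$ is $c_{w_0}$. For any word $v=z_{j_1}\cdots z_{j_{n'}}$, the $(1,n+1)$-entry of $A_{j_1}\cdots A_{j_{n'}}$ counts the directed paths of length $n'$ from vertex $1$ to vertex $n+1$ in the labeled graph on $\{1,\ldots,m+1\}$ whose only edges are $r\to r+1$ with label $k_r$. Because every edge strictly increases the index, there is a unique such path; it has length exactly $n$ and reads off the label sequence $k_1,\ldots,k_n$. Hence the entry is $1$ if $v=w_0$ and $0$ otherwise, so summing over all monomials of $p$ yields $c_{w_0}$. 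The constant term is recovered by the same argument applied to the entry $(1,1)$: nonconstant monomials vanish off the strictly upper-triangular structure, so only the identity contribution, equal to the constant coefficient, survives. The size $m+1$ is tight since words of length $m$ require $m+1$ vertices.

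The conclusion $\mathcal{P}_{\rm coeff}=\mathcal{P}_{\rm eval}$ and the restriction $\ell=\deg(p)+1$ are immediate consequences: one inclusion is the trivial direction, the other follows from the equivalence just established. I do not anticipate a serious obstacle; the only subtlety is verifying that the strictly forward-shift structure of the constructed matrices isolates a single word at the designated entry, which is what forces the choice of matrix size $m+1$ rather than anything smaller.
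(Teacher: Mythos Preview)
Your proposal is correct and follows essentially the same construction as the paper: for a target word $z_{k_1}\cdots z_{k_n}$ one sets $A_j=\sum_{r:\,k_r=j}E_{r,r+1}$ and reads off the coefficient from the $(1,n+1)$-entry of $p(A_1,\ldots,A_d)$. The only cosmetic differences are that you work directly in size $m+1$ (rather than first in size $n+1$ and then padding), phrase the computation via labeled paths, and treat the constant term explicitly; the underlying idea is identical.
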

\begin{proof} ($i$) $\Rightarrow$ ($ii$) is obvious (even without the restriction on the matrix size $m+1$). 
For ($ii$) $\Rightarrow$ ($i$) we  construct matrices $A_1,\ldots, A_d$ that allow us to isolate a single coefficient of $p$. 

Let  $z_{k_1}\cdots z_{k_\ell}$ be a word in the letters $z_1,\ldots, z_d$. For $j=1,\ldots, d$ define
$$ A_j \coloneqq \sum_{i=1,\ldots, \ell;\ k_i=j} E_{i,i+1}\in \Mat_{\ell+1}(\mathbb Z_{\geqslant 0}),$$ where $E_{i,j}$ denotes the matrix (of size $\ell+1$) with a $1$ in position $(i,j)$ and zeros elsewhere. For $t_1,\ldots, t_r\in\{1,\ldots, d\}$ we have $$A_{t_1}\cdots A_{t_r}=\sum_{\tiny\begin{array}{c}i \\ k_i=t_1\\k_{i+1}=t_2\\ \vdots \\ k_{i+r-1}=t_r\end{array} }E_{i,i+r}\in \Mat_{\ell+1}(\mathbb Z_{\geqslant 0}).$$ In particular, the $(1,\ell+1)$-entry of a product  $A_{t_1}\cdots A_{t_r}$ is $1$ if and only if 
$r=\ell$ and $(k_1,\ldots, k_\ell)=(t_1,\ldots, t_\ell)$; in all other cases it is zero. So $p(A_1,\ldots, A_d)$ contains in its upper right entry precisely the coefficient of $p$ at the word $z_{k_1}\cdots z_{k_\ell}.$

Since all words appearing in $p$ are of length at most $\deg(p)=m$, we can do this procedure with matrices $A_j$ of size at most $m+1$, and thus clearly with matrices of size exactly $m+1$.
\end{proof}

\begin{remark}
P\'olya's theorem \cite{Po28,Ha52} states that for every homogeneous polynomial $p \in \mathbb{R}[x_1, \ldots, x_d]$ that is strictly positive  on the $d$-simplex
$$\Delta_d \coloneqq \left\{(a_1, \ldots, a_d) \in \mathbb{R}^d \biggm| a_i \geqslant 0, \sum_{i = 1}^d a_i = 1 \right\},$$
the polynomial 
$$ (x_1 + \cdots + x_d)^n \cdot p(x_1, \ldots, x_d)$$
has positive coefficients, for sufficiently large $n \in \mathbb{N}$.
In \cref{thm:freePoyla}, the space of nonnegative matrices takes the role of the $d$-simplex. While in the commutative case we have to multiply $p$ with an additional polynomial, this is not so in the free version. 

To the best of our knowledge, this is the first result that shares these key features with Pólya's theorem, specifically linking the positivity of the coefficients of a polynomial with positivity on the nonnegative orthant.

\end{remark}

We now show that for these cones the moment membership problem is undecidable.

\begin{theorem} 
\label{thm:moment_nonCommPoly} Let  $d,s\geqslant 7$. Then  the moment membership problem for $\mathcal R=\mathbb Z\langle z_1,\ldots, z_d\rangle$, $\mathcal P_{\rm coeff}=\mathcal P_{\rm eval}$ and $s$ is undecidable. This remains true if we restrict the instances to linear matrix polynomials, i.e.\  
$A\in \Mat_s(\mathbb Z\langle z_1,\ldots, z_d\rangle)$ whose entries are linear forms in $z_1,\ldots, z_d$.
\end{theorem}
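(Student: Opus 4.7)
My plan is to reduce directly from the word-trace positivity problem for arbitrary integer matrices, invoked in \cref{rem:generalizations} and shown undecidable in \cite{DCCW}: for $d,s\geqslant 7$ and arbitrary $A_1,\ldots,A_d\in\Mat_s(\mathbb Z)$, it is undecidable whether $\tr(A_{k_1}\cdots A_{k_\ell})\geqslant 0$ holds for every $\ell\in\mathbb N$ and every word $(k_1,\ldots,k_\ell)\in\{1,\ldots,d\}^\ell$. The whole task is to repackage this word-trace question as a single-matrix moment problem over $\mathcal R=\mathbb Z\langle z_1,\ldots,z_d\rangle$.

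Given such $A_1,\ldots,A_d$, I would set
$$A(z) \coloneqq \sum_{i=1}^d A_i\, z_i \in \Mat_s(\mathbb Z\langle z_1,\ldots,z_d\rangle),$$
which is manifestly a linear matrix polynomial. Because the integer scalar entries of each $A_i$ commute with every letter $z_j$, routine expansion of the matrix power yields
$$\tr(A(z)^n) \;=\; \sum_{i_1,\ldots,i_n=1}^d \tr(A_{i_1}\cdots A_{i_n})\cdot z_{i_1}\cdots z_{i_n}.$$
The key observation is that distinct words in $z_1,\ldots,z_d$ form the standard $\mathbb Z$-basis of $\mathcal R$ (viewed as the tensor algebra of $\mathbb Z^d$), so the coefficient of $z_{i_1}\cdots z_{i_n}$ in $\tr(A(z)^n)$ is exactly the scalar trace $\tr(A_{i_1}\cdots A_{i_n})$, with no cyclic or combinatorial collapse. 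Consequently, $\tr(A(z)^n)\in\mathcal P_{\rm coeff}$ for every $n\geqslant 1$ if and only if $\tr(A_{i_1}\cdots A_{i_\ell})\geqslant 0$ for every nonempty word, i.e.\ exactly the undecidable problem of \cite{DCCW}.

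Since $A(z)$ is linear in $z_1,\ldots,z_d$ by construction, the refinement to linear matrix polynomials holds automatically, and the free P\'olya identity $\mathcal P_{\rm coeff}=\mathcal P_{\rm eval}$ from \cref{thm:freePoyla} transfers the conclusion to the evaluation cone. The structural part of the reduction is almost trivial, precisely because the non-commutative setting preserves each word as an independent basis element — in sharp contrast to the commutative case of \cref{thm:undec-com}, where a delicate block construction with an auxiliary matrix $M$ was needed to prevent monomials from collapsing. The main obstacle I anticipate is therefore bookkeeping rather than conceptual: confirming that \cite{DCCW} really delivers the bound $d,s\geqslant 7$ in the exact form of word-trace nonnegativity used above, rather than a cognate formulation (matrix mortality, zero-reachability, and the like) from which a small auxiliary padding argument would be needed.
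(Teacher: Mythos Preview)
Your proposal is correct and follows essentially the same approach as the paper: the paper also sets $A=\sum_{k=1}^d z_kA_k$, expands $\mu_n(A)=\sum_{k_1,\ldots,k_n}\tr(A_{k_1}\cdots A_{k_n})\,z_{k_1}\cdots z_{k_n}$, and then invokes \cite[Lemma~3]{DCCW} directly for the undecidability of word-trace nonnegativity at $d,s\geqslant 7$. Your caveat about the precise bounds is resolved by that citation; no auxiliary padding is needed.
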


\begin{proof}
For $A=\sum_{k=1}^d z_kA_k$ with $A_k\in \Mat_s(\mathbb Z)$ we have $$\mu_n(A)=\sum_{k_1,\ldots, k_n=1}^d \tr(A_{k_1}\cdots A_{k_n})\cdot z_{k_1}\cdots z_{k_n}.$$ So $\mu_n(A)\in\mathcal P_{\rm coeff}$ means that $\tr(A_{k_1}\cdots A_{k_n})\geqslant 0$ for all $k_1,\ldots, k_n=1,\ldots, d$. Undecidability of this problem was proven in \cite[Lemma 3]{DCCW}. 
\end{proof}

\section{Conclusion}
\label{sec:conclusion}

We have studied the moment membership problem (\cref{prob:moment_problem}) for matrices over a ring. We have shown that there is a relation to LRS for commutative rings (\cref{lem:mom_to_lin_rec} and \cref{prop:lin_rec_equ_moment}) and that the moments positivity problem is decidable in many cases, including unitary and orthogonal matrices (\cref{thm:moment_orthogonal} and \cref{cor:moment_unitary}) as well as matrices with a unique dominating eigenvalue or only real eigenvalues (\cref{thm:moment_decidable}). Finally, we have shown that the generalized moment membership problem is undecidable over the ring of commutative and non-commutative polynomials, where the positivity cone is given by the set of polynomials with non-negative coefficients (\cref{thm:undec-com} and \cref{thm:moment_nonCommPoly}).

The central open question remains, namely whether the moment membership problem is decidable or undecidable for $\mathcal{R} = \mathbb{Q}$ and $\mathcal{P} = [0, \infty)$. In the context of rings it would be interesting whether it is undecidable for commutative polynomials for the cone of sum-of-square polynomials or the non-negative polynomials. This might be the case, as these cones have a richer structure than that of polynomials with nonnegative coefficients. Another open question is the complexity of the positivity problem for orthogonal and unitary matrices. While we have shown that these problems are decidable (\cref{thm:moment_orthogonal} and \cref{cor:moment_unitary}) it is not known whether these problems are for example $\NP$-hard or not.

\section{Acknowledgments} The authors are grateful to Susheel Shankar for pointing out the need for clarification in \cref{rem:generalizations}. This research was funded in part by the Austrian Science Fund (FWF) [doi:\href{https://www.doi.org/10.55776/P33122}{10.55776/P33122}]. AK further acknowledges funding of the Austrian Academy of Sciences (\"OAW) through the DOC scholarship 26547.

\end{document}